\documentclass{amsart}

\usepackage{mathtools,url}

\newtheorem{theorem}{Theorem}[section]
\newtheorem{lemma}[theorem]{Lemma}
\newtheorem{proposition}[theorem]{Proposition}
\newtheorem{corollary}[theorem]{Corollary}

\theoremstyle{definition}

\newtheorem{example}[theorem]{Example}

\theoremstyle{remark}
\newtheorem{remark}[theorem]{Remark}

\numberwithin{equation}{section}

\begin{document}
	\title[Nilpotent product probability of finite rings]{Nilpotent product probability of finite rings}

	\author{Dibyasman Sarma}
	\address{Department of Mathematics, National Institute of Technology Meghalaya\\ Sohra, Meghalaya
		793108, India}
	\email{dibyasman.sarma@gmail.com}

	\author{Tikaram Subedi}
	\address{Department of Mathematics, National Institute of Technology Meghalaya\\ Sohra, Meghalaya
		793108, India}
	\email{tsubedi2010@gmail.com}
	
	\subjclass[2020]{16P10, 16N40, 13M05}
	\keywords{Finite rings, nilpotent elements, probability}
	
	
	
	\begin{abstract}
		For a finite ring $R$, we investigate the probability that the product of two randomly chosen elements in $R$ is nilpotent, which we call the nilpotent product probability (NPP) of $R$ and denote by $P_{nil}(R)$. We derive bounds for $P_{nil}(R)$ that depend on the structure of $R$, and prove that among all commutative rings with identity, the class of local rings with residue field $\mathbb{Z}_2$ attains the maximum value of NPP, which is equal to $3/4$. Further, we determine the set of all values of NPP's for commutative rings with identity and classify all rings attaining those values. Finally, we investigate the relationship between NPP of $R$ and NPP of some subrings of the matrix ring over $R$.
	\end{abstract}
	
	\maketitle
	
\section{Introduction}
	
In the last few decades, the study of probability in finite algebraic structures has been providing a tool for several mathematicians to investigate if the structures satisfy certain properties. Commuting probability is such an approach first studied by Gustafson \cite{gustafson} to measure how close a finite group is to being abelian. After Gustafson, MacHale \cite{machale} studied the same for finite rings and the  work has been continued by many authors, see \cite{buckley1,buckley2,buckley3,dutta}. In 2018, the idea of nullity degree of a finite ring was introduced by Esmkhani and Amiri \cite{emskhani1}, which is the probability that the product of two ring elements is zero. The same is continued in \cite{dolzan,emskhani2} and generalized to three elements in \cite{sarma}. Further, the probability that a finite commutative ring is nil-clean is studied in \cite{danchev}. Inspired by these motivational works, we come up with the idea of investigating the probability that the product of two randomly chosen ring elements is nilpotent, which we call the {\it nilpotent product probability (NPP)} of the ring.

Throughout this paper, we assume $R$ to be a finite ring, not necessarily with identity. We denote the NPP of $R$ by $P_{nil}(R)$. Thus,
\[P_{nil}(R)=\frac{|\{(x,y)\in R\times R~:~xy\in N(R)\}|}{|R|^2},\]
where $N(R)$ is the set of nilpotents in $R$, and $|S|$ is the $cardinality$ of any set $S$. $Z(R)$ is the set of $zero~divisors$ of $R$. For any integer $n\ge 2$, $\mathbb{Z}_n$ is the ring of integers modulo $n$ and $M_n(R)$ is the ring of $n$-square matrices over $R$. $T_n(R)$ and $S_n(R)$ are the subrings of $M_n(R)$, which consist of all upper triangular matrices and upper triangular matrices with scalar diagonal entries, respectively. $zp(R)$ is the nullity degree of $R$, which is $|\{(x,y)\in R\times R : xy=0\}|/|R|^2$ as defined in \cite{emskhani1}. By $\mathbb{F}_q$ we denote the finite field of size $q=p^m$ (with some prime $p$ and positive integer $m$). $R$ is called $reduced$ if $R$ has no nonzero nilpotent elements. $R$ is called $NI$ if $N(R)$ is a two sided ideal of $R$. $R$ is called $nil$ if every element of $R$ is nilpotent. $R$ is called $local$ if it has a unique maximal ideal. For any prime $p$ and positive integer $k$, by $\mathfrak{R}_{p^k}$ we denote the set of all finite commutative local rings with identity with residue field $\mathbb{F}_{p^k}$. For any two integers $a$ and $b$, $gcd(a,b)$ is the greatest common divisor of $a$ and $b$. By $\mathbb{P}$ we denote the set of prime numbers.

It is obvious that for any ring $R$, $0<P_{nil}(R)\le1$, and $P_{nil}(R)=1$ if and only if $R$ is nil. In section \ref{s1}, we study bounds for the NPP of any ring $R$, which depend on the structure of $R$. This section is further divided into three subsections. In section \ref{ss1}, we investigate the NPP for NI rings. In section \ref{ss2}, we explore the NPP for commutative rings with identity providing a complete classification of such rings using this probability measure. In section \ref{ss3}, we establish the relationships between the NPP's of $T_n(R)$ and $R$, as well as the NPP's of $S_n(R)$ and $R$. Moreover, we acquire the maximum values of the same for both these classes of matrix rings over any commutative ring with identity.

\medspace

\section{Bounds for $P_{nil}(R)$}\label{s1}

For any $x\in R$, we define the set $n(x)$ as 
\[n(x):=\{y\in R : xy\in N(R)\}.\]

The following lemma is straightforward.

\begin{lemma}\label{l4}
	For any ring $R$, we have \[P_{nil}(R)=\frac{\sum\limits_{x\in R}|n(x)|}{|R|^2}.\]
\end{lemma}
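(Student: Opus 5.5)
The plan is a direct double-counting argument applied to the numerator in the definition of $P_{nil}(R)$. Let
\[S=\{(x,y)\in R\times R : xy\in N(R)\}.\]
By definition, $P_{nil}(R)=|S|/|R|^2$, so it suffices to show that
\[|S|=\sum_{x\in R}|n(x)|.\]

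To do this, I would partition $S$ according to the first coordinate. For each $x\in R$, set
\[S_x=\{(x,y) : y\in R,\ xy\in N(R)\}.\]
The sets $S_x$ are pairwise disjoint, since their elements have different first coordinates. Their union is all of $S$, because every pair $(x,y)\in S$ lies in $S_x$ for its own first coordinate $x$. Since $R$ is finite, this gives
\[|S|=\sum_{x\in R}|S_x|.\]

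Next, the map $S_x\to n(x)$ given by $(x,y)\mapsto y$ is a bijection. It is well defined and surjective by the very definition of $n(x)=\{y\in R : xy\in N(R)\}$. It is injective because the first coordinate is fixed. Hence $|S_x|=|n(x)|$ for each $x$.

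Substituting this into the sum and dividing by $|R|^2$ yields the stated formula. I expect no real obstacle here. The only point needing care is that the fibres $S_x$ genuinely partition $S$, and this is immediate. Neither an identity element nor commutativity is used, so the lemma holds for the general finite rings considered in the paper.
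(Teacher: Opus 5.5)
Your proposal is correct, and it is essentially the paper's approach. The paper states this lemma as straightforward without giving a proof, and your argument (partition the pairs $(x,y)$ with $xy\in N(R)$ by first coordinate, then identify each fibre with $n(x)$) is exactly the counting that remark has in mind; it uses no identity or commutativity, as you note.
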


We obtain the following result, where we prove the multiplicative property of the NPP with respect to direct products of rings.

\begin{proposition} \label{l3}
	For any two rings $R_1$ and $R_2$, we have 
	\[P_{nil}(R_1\times R_2)=P_{nil}(R_1)P_{nil}(R_2).\]
\end{proposition}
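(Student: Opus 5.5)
The plan is to reduce everything to the fact that nilpotency in a direct product is checked componentwise, and then count pairs.

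\textbf{Step 1: nilpotents in a product.} First I will show that $N(R_1\times R_2)=N(R_1)\times N(R_2)$. Suppose $(a,b)^k=(a^k,b^k)=0$. Then $a^k=0$ and $b^k=0$, so $a\in N(R_1)$ and $b\in N(R_2)$. Conversely, suppose $a^{k_1}=0$ and $b^{k_2}=0$. With $k=\max(k_1,k_2)$ we get $(a,b)^k=(a^k,b^k)=(0,0)$. This holds whether or not the rings have identity, since only the multiplicative semigroup structure is used.

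\textbf{Step 2: the set $n(x)$ in a product.} Fix $x=(x_1,x_2)\in R_1\times R_2$. Multiplication in the product is componentwise, so $xy=(x_1y_1,x_2y_2)$. By Step 1, $xy$ is nilpotent exactly when $x_1y_1\in N(R_1)$ and $x_2y_2\in N(R_2)$. Hence
\[
n(x)=n(x_1)\times n(x_2) \quad\text{and}\quad |n(x)|=|n(x_1)|\,|n(x_2)|.
\]

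\textbf{Step 3: summing.} I apply Lemma \ref{l4} to $R_1\times R_2$, use $|R_1\times R_2|^2=|R_1|^2|R_2|^2$, and note that the double sum factors:
\[
\sum_{(x_1,x_2)}|n(x_1)||n(x_2)|=\Bigl(\sum_{x_1\in R_1}|n(x_1)|\Bigr)\Bigl(\sum_{x_2\in R_2}|n(x_2)|\Bigr).
\]
Dividing by $|R_1|^2|R_2|^2$ and applying Lemma \ref{l4} again to each factor gives $P_{nil}(R_1)P_{nil}(R_2)$.

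There is no real obstacle here. The only point needing care is Step 1, specifically taking the maximum of the two nilpotency indices.
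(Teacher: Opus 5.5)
Your proposal is correct and follows the paper's proof essentially verbatim: the paper also shows $n((x_1,x_2))=n(x_1)\times n(x_2)$ and then factors the sum from Lemma~\ref{l4}. The only difference is that you state and prove $N(R_1\times R_2)=N(R_1)\times N(R_2)$ explicitly (by taking the maximum of the two nilpotency indices), whereas the paper uses this fact without comment.
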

\begin{proof}
	For any $(x_1,x_2)\in R_1\times R_2$, we have
	\begin{align*}
		n\big((x_1,x_2)\big)&=\big\{(y_1,y_2)\in R_1\times R_2 : (x_1,x_2)(y_1,y_2)\in N(R_1\times R_2)\big\}\\
		&=\big\{(y_1,y_2)\in R_1\times R_2 : x_1y_1\in N(R_1),x_2y_2\in N(R_2)\big\}\\
		&=\big\{y_1\in R_1 : x_1y_1\in N(R_1)\big\}\times \big\{y_2\in R_2 : x_2y_2\in N(R_2)\big\}\\
		&=n(x_1)\times n(x_2).
	\end{align*}
	Thus,
	\begin{align*}
		P_{nil}(R_1\times R_2)&=\frac{\sum\limits_{(x_1,x_2)\in R_1\times R_2}\big|n\big((x_1,x_2)\big)\big|}{|R_1\times R_2|^2}\\
		&=\frac{\sum\limits_{x_1\in R_1}|n(x_1)|\sum\limits_{x_2\in R_2}|n(x_2)|}{|R_1|^2|R_2|^2}\\
		&=P_{nil}(R_1)P_{nil}(R_2).\qedhere
	\end{align*}
\end{proof}

\medspace

In the following, for any ring $R$, we establish a comparison between the nullity degree and the NPP of $R$.

\begin{proposition}\label{p2}
	Let $R$ be a ring. Then
	\[zp(R)\le P_{nil}(R).\]
	Moreover, the equality holds if and only if any nonzero nilpotent element cannot be expressed as a product of two elements.
\end{proposition}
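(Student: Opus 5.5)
The plan is to compare the two counting sets directly. Put
\[A=\{(x,y)\in R\times R : xy=0\},\qquad B=\{(x,y)\in R\times R : xy\in N(R)\}.\]
Since $0\in N(R)$, we have $A\subseteq B$. Dividing by $|R|^2$ gives $zp(R)\le P_{nil}(R)$ at once. Equivalently, in the notation of Lemma \ref{l4}, each annihilator set $\{y : xy=0\}$ sits inside $n(x)$, and we sum over $x$.

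For the equality clause, note that $zp(R)=P_{nil}(R)$ holds if and only if $|A|=|B|$. Because $R$ is finite and $A\subseteq B$, this is the same as $A=B$. Now
\[B\setminus A=\{(x,y) : xy\in N(R)\setminus\{0\}\}.\]
This set is nonempty exactly when some nonzero nilpotent element can be written as a product $xy$ with $x,y\in R$. Hence equality holds if and only if no nonzero nilpotent element is a product of two elements of $R$. We can observe in passing that this condition is vacuous when $R$ is reduced.

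There is essentially no obstacle here. The only points needing care are stating the finiteness argument (a subset of equal cardinality is the whole set) and reading the condition as ``no nonzero nilpotent lies in $R^2$''. Note that we cannot take $x=1$: the paper does not assume $R$ has an identity, so the condition is a genuine restriction rather than automatically false.
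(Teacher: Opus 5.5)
Your proof is correct, and it is the natural argument the paper relies on: the paper states Proposition~\ref{p2} without a written proof, and the counting is exactly your inclusion $\{(x,y): xy=0\}\subseteq\{(x,y): xy\in N(R)\}$, with the two counts equal, by finiteness, precisely when no pair has product in $N(R)\setminus\{0\}$. One small nuance concerns your closing gloss: your argument proves the statement for single products $xy$, so $R^2$ there should mean the set $\{xy : x,y\in R\}$ rather than the additive subgroup generated by products (for finite rings the two readings happen to coincide, but showing that needs a separate structural argument).
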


\begin{remark}\label{r1}
	Note that, if $R$ is reduced, the property that ``any nonzero nilpotent element cannot be expressed as a product of two elements" holds trivially in $R$. However, there exists a non reduced ring with this certain property, for example, the ring $2\mathbb{Z}_{12}$. Further, observe that the nullity degree and NPP of $2\mathbb{Z}_{12}$ are equal to $4/9$.
\end{remark}

\begin{corollary}\label{c6}
	Let $R$ be a ring of order $n$. Then 
	\[P_{nil}(R)\ge\frac{2n-1}{n^2}.\]
	The equality holds if and only if either $R\cong \mathbb{F}_n$ or $R=\{0\}$.
\end{corollary}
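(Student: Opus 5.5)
Since $0\in N(R)$, every pair $(x,y)$ with $x=0$ or $y=0$ has nilpotent product. There are exactly $2n-1$ such pairs in $R\times R$, so the number of pairs counted by $P_{nil}(R)$ is at least $2n-1$. This gives the bound directly; via Lemma~\ref{l4} it reads $|n(0)|=n$ and $|n(x)|\ge 1$ for $x\neq 0$ (since $0\in n(x)$). Equivalently, one could invoke Proposition~\ref{p2} together with $zp(R)\ge (2n-1)/n^2$, but the direct count is simpler.

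For the equality case, equality holds exactly when $xy\notin N(R)$ for all nonzero $x,y\in R$.

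If $R=\{0\}$, then $n=1$ and both sides equal $1$, so equality holds. If $R\cong\mathbb{F}_n$, the only nilpotent is $0$ and a field has no zero divisors, so $xy\ne 0$ for $x,y\ne0$; equality holds.

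Conversely, suppose $R\neq\{0\}$ and equality holds.
\begin{itemize}
\item[(i)] $R$ is reduced. If $a\neq0$ were nilpotent, with $m\ge2$ minimal such that $a^m=0$, then $x=a^{m-1}\ne0$ and $y=a$ give $xy=0\in N(R)$, a contradiction.
\item[(ii)] $R$ has no nonzero zero divisors. The condition says precisely that $xy\neq 0$ whenever $x,y\neq0$, since $xy\notin N(R)\supseteq\{0\}$.
\end{itemize}

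The main obstacle is then to show that a finite ring, not assumed to have identity, with no zero divisors is a field. Fix $a\neq0$. Left multiplication $L_a\colon y\mapsto ay$ is injective, hence bijective since $R$ is finite, and the same holds for right multiplication.

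\begin{itemize}
\item[(1)] An identity exists. Choose $e$ with $ae=a$. Then $a(ey-y)=ay-ay=0$, so $ey=y$ for all $y$. Similarly $(ye-y)\cdot$ anything argument: from $a e=a$ we get $aee=ae$, so $a(e^2-e)=0$ and $e^2=e$. Then for any $y$, $(ye-y)e=ye^2-ye=0$, and since $e\neq0$ this forces $ye=y$. Hence $e$ is a two-sided identity.
\item[(2)] Every nonzero element is a unit. Surjectivity of $L_a$ gives $b$ with $ab=e$, so nonzero elements have right inverses. A standard argument then shows $R$ is a division ring: $ba\cdot ba=b(ab)a=ba$ with $ba\ne0$, so $ba$ is a nonzero idempotent, and $ba(ba-e)=0$ forces $ba=e$.
\item[(3)] $R$ is commutative. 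By Wedderburn's little theorem, the finite division ring $R$ is a field of order $n$, so $R\cong\mathbb{F}_n$.
\end{itemize}

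This completes the equality case.
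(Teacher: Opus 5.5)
Your proof is correct, but it is more self-contained than the paper's. The paper proves the corollary in one line, via the route you mention in passing. Proposition~\ref{p2} gives $zp(R)\le P_{nil}(R)$. Lemma 3.1 of Esmkhani and Amiri bounds $zp(R)$ below by $(2n-1)/n^2$, with equality only for fields. So equality for $P_{nil}(R)$ squeezes $zp(R)$ down to $(2n-1)/n^2$ and forces $R$ to be a field.

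You instead count the $2n-1$ pairs with a zero coordinate directly and then analyse equality by hand. This amounts to reproving the cited nullity-degree lemma in the $P_{nil}$ setting. In particular, you make explicit the fact, hidden in the citation, that a nonzero finite ring with no zero divisors and no assumed identity is a field. You get the identity and inverses from bijectivity of $y\mapsto ay$, and commutativity from Wedderburn's theorem. The paper's route is shorter but outsources the real content; yours needs nothing beyond Wedderburn.

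Minor cleanups:
\begin{itemize}
\item The sentence ``Similarly $(ye-y)\cdot$ anything argument'' is garbled and should be dropped.
\item $e^2=e$ is immediate once $e$ is a left identity (take $y=e$), so the detour through $aee=ae$ is unnecessary.
\item Step (i) is redundant, since (ii) already implies $R$ is reduced.
\end{itemize}
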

\begin{proof}
	It follows from Proposition \ref{p2} and \cite[Lemma 3.1]{emskhani1}.
\end{proof}

\medspace

\subsection{NPP for NI rings.}\label{ss1}
Since $N(R)$ need not be an ideal of $R$ in general, throughout this subsection, $R$ is assumed to be NI. Note that, for any NI ring $R$ and $x\in R$, $n(x)$ is a right ideal of $R$ and $N(R)\subseteq n(x)$. In contrast, when $R$ is not NI, these properties may fail. For example, consider the ring $M_2(\mathbb{Z}_2)$ and the elements $I=\begin{bmatrix}
	1 & 0\\0 & 1
\end{bmatrix}, ~E_{12}=\begin{bmatrix}
	0 & 1\\0 & 0
\end{bmatrix}, ~E_{21}=\begin{bmatrix}
0 & 0\\1 & 0
\end{bmatrix}$, where $n(I)$ is not an ideal as $E_{12}, E_{21}\in n(I)$, but $E_{12}+E_{21}\notin n(I)$, and $N(M_2(\mathbb{Z}_2))\not\subseteq n(E_{21})$ as $E_{12}\in N(M_2(\mathbb{Z}_2))$, but $E_{12}\notin n(E_{21})$.

\medskip

We provide the following lemma, which will be useful.

\begin{lemma}[{\cite{Kobayashi}}]\label{l5}
	Let $R$ be a ring. Then either $R$ has identity or every element is a zero divisor. In fact, if $R$ is with identity, then every element is either a unit or a zero divisor.
\end{lemma}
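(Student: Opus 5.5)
The plan is to use finiteness of $R$ through the map given by left multiplication. For $a\in R$, let $\lambda_a\colon R\to R$ be $\lambda_a(x)=ax$. Since $\lambda_a$ is an endomorphism of the additive group of $R$ and $R$ is finite, $\lambda_a$ is injective if and only if it is surjective. The same holds for right multiplication $\rho_a(x)=xa$. Throughout, I read ``zero divisor'' in the two-sided sense: $a$ is a zero divisor if there is a nonzero $b$ with $ab=0$ or $ba=0$.

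First I treat the case where $R$ has an identity. Suppose $a$ is not a zero divisor. Then $\lambda_a$ and $\rho_a$ are injective, hence bijective. Surjectivity of $\lambda_a$ gives $b$ with $ab=1$. Surjectivity of $\rho_a$ gives $c$ with $ca=1$. Then $c=c(ab)=(ca)b=b$, so $a$ is a unit. This proves the second sentence of the statement.

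Next suppose some $a\in R$ is not a zero divisor, with no identity assumed. Then $\lambda_a$ and $\rho_a$ are bijections. Surjectivity of $\lambda_a$ gives $e$ with $ae=a$. For any $x\in R$ we have $a(ex-x)=aex-ax=ax-ax=0$, and injectivity of $\lambda_a$ gives $ex=x$, so $e$ is a left identity. Similarly, surjectivity of $\rho_a$ gives $f$ with $fa=a$. Then $(xf-x)a=xfa-xa=0$ for all $x$, and injectivity of $\rho_a$ gives $xf=x$, so $f$ is a right identity. Finally $e=ef=f$, so $R$ has a two-sided identity. Taking the contrapositive, if $R$ has no identity then every element is a zero divisor. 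When $R=\{0\}$ the statement is vacuous or holds under the convention that $0$ is the identity.

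The step needing most care is producing a genuine two-sided identity without assuming one. It is handled by running the left-multiplication and right-multiplication arguments separately and then using $e=ef=f$. The paper only cites \cite{Kobayashi}, so this self-contained argument simply replaces the citation.
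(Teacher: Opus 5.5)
Your proof is correct. The paper gives no argument of its own for this lemma and only cites Kobayashi, so there is no route to compare against. Your argument is the standard self-contained proof: ``not a zero divisor'' gives injectivity of $\lambda_a$ and $\rho_a$, and finiteness upgrades this to surjectivity. It establishes exactly the two facts the paper later uses in Theorem \ref{l2}: $Z(R)=R$ when $R$ has no identity, and every non-zero-divisor is a unit when $R$ has identity.

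Reading ``zero divisor'' in the two-sided sense, and running both $\lambda_a$ and $\rho_a$ before concluding $e=ef=f$, is genuinely necessary and not just caution. Take the ring of $2\times 2$ matrices over $\mathbb{Z}_2$ with zero second row. There $E_{11}$ is a left identity, so $\lambda_{E_{11}}$ is bijective, yet the ring has no identity. Correspondingly, $E_{12}E_{11}=0$ shows that $E_{11}$ is a zero divisor only on the other side. Under a one-sided reading, the first sentence of the lemma would therefore be false.
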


In the following, we establish the NPP for any ring $R$.

\begin{theorem} \label{l2}
	For any ring $R$, we have
		\[P_{nil}(R)=\displaystyle\frac{(2|R|-|Z(R)|)|N(R)|+\sum\limits_{x\in Z(R)\setminus N(R)}|n(x)|}{|R|^2}.\]
\end{theorem}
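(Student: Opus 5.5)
The plan is to evaluate the sum in Lemma \ref{l4} by splitting $R$ into three disjoint pieces,
\[R = N(R)\ \sqcup\ \bigl(Z(R)\setminus N(R)\bigr)\ \sqcup\ \bigl(R\setminus Z(R)\bigr),\]
and computing $|n(x)|$ exactly on the first and third pieces. The middle piece is left as the sum that appears in the statement.

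Since we are in this subsection, I will use the standing hypothesis that $R$ is NI. This is essential: in $M_2(\mathbb{Z}_2)$ the nilpotent $E_{12}$ has $E_{12}E_{21}=E_{11}$, which is not nilpotent. So ``any ring'' in the statement has to be read as ``any NI ring''.

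\textbf{Step 1: $N(R)\subseteq Z(R)$.} The element $0$ is a zero divisor under the usual convention. If $x\neq 0$ is nilpotent, let $k$ be minimal with $x^k=0$. Then $x\cdot x^{k-1}=0$ with $x^{k-1}\neq 0$, so $x$ is a zero divisor. Hence the decomposition above is a genuine partition, and $|R\setminus Z(R)|=|R|-|Z(R)|$.

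\textbf{Step 2: nilpotent $x$.} Let $x\in N(R)$. Because $N(R)$ is a two-sided ideal, $xy\in N(R)$ for every $y\in R$. Therefore $n(x)=R$, and this piece contributes $|R|\,|N(R)|$ to the sum.

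\textbf{Step 3: $x\notin Z(R)$.} Such an $x$ exists only when $Z(R)\neq R$. By Lemma \ref{l5}, $R$ then has an identity and $x$ is a unit. If $y\in N(R)$, then $xy\in N(R)$ because $N(R)$ is an ideal. Conversely, if $xy\in N(R)$, then $y=x^{-1}(xy)\in N(R)$, again by the ideal property. Thus $n(x)=N(R)$, and this piece contributes $(|R|-|Z(R)|)\,|N(R)|$. If $R$ has no identity, Lemma \ref{l5} gives $Z(R)=R$, so this piece is empty and the formula remains consistent.

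\textbf{Step 4: assemble.} Adding the three contributions gives
\[\sum_{x\in R}|n(x)| = |R|\,|N(R)| + (|R|-|Z(R)|)\,|N(R)| + \sum_{x\in Z(R)\setminus N(R)}|n(x)|,\]
and the first two terms combine to $(2|R|-|Z(R)|)\,|N(R)|$. Dividing by $|R|^2$ and applying Lemma \ref{l4} yields the stated formula.

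The only genuinely delicate point is Step 3, where Lemma \ref{l5} is needed to turn ``not a zero divisor'' into ``unit'', together with recognising that the NI hypothesis is indispensable in Steps 2 and 3.
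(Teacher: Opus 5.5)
Your proposal is correct and takes the same route as the paper. You split the sum from Lemma \ref{l4} over $N(R)$, $Z(R)\setminus N(R)$ and $R\setminus Z(R)$, use $n(x)=R$ for nilpotent $x$, and invoke Lemma \ref{l5} so that non-zero-divisors occur only when $R$ has identity, where they are units with $n(x)=N(R)$. The paper writes this as two cases (with and without identity) and leaves implicit what you state outright: that $N(R)\subseteq Z(R)$, and that the argument rests on the subsection's standing NI hypothesis, which is genuinely needed since the formula fails for $M_2(\mathbb{Z}_2)$.
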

\begin{proof}
	From Lemma \ref{l4}, we have
	\[P_{nil}(R)=\displaystyle\frac{\sum\limits_{x\in N(R)}|n(x)|+\sum\limits_{x\in Z(R)\setminus N(R)}|n(x)|+\sum\limits_{x\not\in Z(R)}|n(x)|}{|R|^2}.\]
	It is immediate that for $x\in N(R)$, $n(x)=R$. Now, we have two cases:\\
	\textbf{Case 1.} Suppose that $R$ is with identity.  Let $x\not\in Z(R)$. Then $x$ is a unit. Assume that $y\in n(x)$. Then $xy\in N(R)$, and so, $x^{-1}(xy)\in N(R)$, or, $y\in N(R)$. Therefore, $n(x)=N(R)$. Thus,
	\[P_{nil}(R)=\frac{|R||N(R)|+(|R|-|Z(R)|)|N(R)|+\sum\limits_{x\in Z(R)\setminus N(R)}|n(x)|}{|R|^2}.\]
	\textbf{Case 2.} Suppose that $R$ is without identity. Then we have $Z(R)=R$. Thus, 
	\[P_{nil}(R)=\frac{|R||N(R)|+\sum\limits_{x\in R\setminus N(R)}|n(x)|}{|R|^2}.\]
	 Hence, combining both the cases, we have
	 \[P_{nil}(R)=\frac{(2|R|-|Z(R)|)|N(R)|+\sum\limits_{x\in Z(R)\setminus N(R)}|n(x)|}{|R|^2}.\qedhere\] 
\end{proof}

\medspace

In the following result, we derive bounds for $P_{nil}(R)$ in terms of the size of $R, Z(R)$ and $N(R)$. Further, we discuss the condition under which the bounds become sharp.
 
\begin{theorem} \label{t1}
	For any ring $R$, we have
	\begin{enumerate}
		\item[{(1)}] $P_{nil}(R)\ge \displaystyle\frac{(2|R|-|Z(R)|)|N(R)|}{|R|^2}$ and 
		
		\item[{(2)}] $P_{nil}(R)\le\displaystyle\frac{2(|R|-|Z(R)|)|N(R)|+|Z(R)|^2}{|R|^2}$.
	\end{enumerate}
	Moreover, the equality holds both in (1) and (2) if and only if $N(R)=Z(R)$.
\end{theorem}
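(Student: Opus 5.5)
Both bounds will come from Theorem \ref{l2}, by bounding the remaining sum $S:=\sum_{x\in Z(R)\setminus N(R)}|n(x)|$ from below and from above. We first need $N(R)\subseteq Z(R)$. If $R$ has no identity, Lemma \ref{l5} gives $Z(R)=R$. If $R$ has identity, a nilpotent element is not a unit, so by Lemma \ref{l5} it is a zero divisor. Hence $|Z(R)\setminus N(R)|=|Z(R)|-|N(R)|$.

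For (1), simply use $S\ge 0$. For (2), we need an upper bound on $|n(x)|$ when $x\in Z(R)\setminus N(R)$. The trivial bound $|n(x)|\le |R|$ gives
\[
S\le (|Z(R)|-|N(R)|)|R|.
\]
Substituting this into Theorem \ref{l2}, the numerator is at most
\[
(2|R|-|Z(R)|)|N(R)|+|Z(R)||R|-|N(R)||R| .
\]
The target numerator is $2|R||N(R)|-2|Z(R)||N(R)|+|Z(R)|^2$. Their difference (target minus ours) equals $(|Z(R)|-|N(R)|)(|Z(R)|-|R|)\le 0$, so the trivial bound is too weak. We therefore need the sharper estimate $|n(x)|\le |Z(R)|$ for $x\in Z(R)\setminus N(R)$; the target numerator is exactly $(2|R|-|Z(R)|)|N(R)|+(|Z(R)|-|N(R)|)|Z(R)|$.

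To prove $n(x)\subseteq Z(R)$, handle the case without identity first: there $Z(R)=R$ and the claim is trivial. With identity, suppose $y\in n(x)$ is not a zero divisor. Then $y$ is a unit by Lemma \ref{l5}. From $xy\in N(R)$ we get $(xy)^k=0$ for some $k$. We want to conclude that $x$ is nilpotent, which is a contradiction. This is the step I expect to be the main obstacle, since in a noncommutative ring $xy$ nilpotent with $y$ a unit need not force $x$ nilpotent. I would first try the identity $yx=y(xy)y^{-1}$: it shows $yx$ is nilpotent, being conjugate to $xy$. That alone does not give $x$ nilpotent, though. If the argument fails, I would fall back on a weaker but sufficient route. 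Note that $xy$ nilpotent, say $(xy)^k=0$, gives $x\cdot\big(y(xy)^{k-1}\big)=0$. If $(xy)^{k-1}\neq 0$ this exhibits $x$ as a zero divisor, which we already know, so it does not help directly. The right framing may instead be to accept that the inclusion holds in the commutative or NI setting, where $x=(xy)y^{-1}$ lies in the ideal $N(R)$, and check whether the authors intend that. For the general statement I would try proving the inclusion directly via Lemma \ref{l5} applied within the finite ring.

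For the equality clause: equality in (1) holds iff $S=0$. Since each $n(x)\supseteq\{0\}$ is nonempty, $S=0$ forces $Z(R)\setminus N(R)=\emptyset$, i.e.\ $Z(R)=N(R)$, and conversely. Similarly, if $N(R)=Z(R)$ then $S=0$ and both bounds coincide with the formula of Theorem \ref{l2}. If instead $N(R)\neq Z(R)$, pick $x\in Z(R)\setminus N(R)$. Then $0\in n(x)$ gives strict inequality in (1). For strict inequality in (2) we need some such $x$ with $n(x)\neq Z(R)$. Without identity, take $y=x$: then $x^2\notin N(R)$, so $x\notin n(x)=n(x)\cap R$, hence $n(x)\neq R=Z(R)$. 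With identity, use $x$ itself similarly, since $x\in Z(R)$ but $x\cdot x\notin N(R)$. This yields the ``if and only if''.
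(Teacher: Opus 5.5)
Your plan matches the paper's: both bounds come from Theorem~\ref{l2}, with $S\ge 0$ giving (1) and the inclusion $n(x)\subseteq Z(R)$ for $x\in Z(R)\setminus N(R)$ giving (2). The gap is the step you flag yourself: you never establish that inclusion. Your stated plan to prove it ``for the general statement'' cannot succeed, because for arbitrary rings the inclusion is false. In $M_2(\mathbb{Z}_2)$, take $x=E_{11}$ (the matrix unit with a single $1$ in position $(1,1)$), which is a non-nilpotent zero divisor. Take the unit $y=E_{12}+E_{21}$. Then $xy=E_{12}$ is nilpotent, so $y\in n(x)\setminus Z(R)$.

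The step is closed by the standing hypothesis of Subsection~\ref{ss1}, where this theorem sits: $R$ is NI. Under that hypothesis your fallback is the whole argument. If $R$ has identity and some $y\in n(x)$ were a unit, then $x=(xy)y^{-1}\in N(R)$ because $N(R)$ is an ideal, contradicting $x\notin N(R)$. Without identity, $Z(R)=R$ and there is nothing to prove. The paper's proof just asserts the inclusion, tacitly using NI.

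You are already relying on NI without saying so. Theorem~\ref{l2} uses $n(x)=R$ for every $x\in N(R)$, and this fails for $x=E_{21}$ in $M_2(\mathbb{Z}_2)$, as the paper itself notes. So commit to the NI setting instead of hedging. (Inequality (2) does survive for arbitrary finite rings by a different count, though not via Theorem~\ref{l2}: split the pairs into $x$ a unit, $x\in Z(R)$ with $y$ a unit, and both in $Z(R)$, and count the middle class over $y$ as $|N(R)y^{-1}|=|N(R)|$ per unit $y$.)

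With the NI line supplied, your proof is complete. Your equality argument is cleaner and more explicit than the paper's: $0\in n(x)$ forces strictness in (1), and $x\in Z(R)$ with $x\notin n(x)$ forces strictness in (2).
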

\begin{proof}
	The lower bound follows directly from Theorem \ref{l2}. The upper bound follows since for any $x\in Z(R)\setminus N(R)$, we have $n(x)\subseteq Z(R)$. Now, notice that the lower and upper bounds coincide and equal to $(2|R|-|N(R)|)|N(R)|/|R|^2$ if and only if $n(x)=Z(R) ~\forall x\in Z(R)\setminus N(R)$, equivalently, $xy\in N(R) ~\forall x,y\in Z(R)$, which  occurs if and only if $N(R)=Z(R)$. Hence, the required result holds.
\end{proof}

\medspace

The following corollary is a consequence of Theorem \ref{t1} for a ring $R$ without identity, which is not nil.

\begin{corollary}\label{c4}
	Let $R$ be a ring without identity. If $R$ is not nil, then
	\[P_{nil}(R)>\frac{(2|R|-|N(R)|)|N(R)|}{|R|^2}.\]
\end{corollary}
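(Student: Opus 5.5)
Since $R$ has no identity, Lemma \ref{l5} gives $Z(R)=R$, so the lower bound in Theorem \ref{t1}(1) reads
\[P_{nil}(R)\ge \frac{(2|R|-|R|)|N(R)|}{|R|^2}=\frac{|R||N(R)|}{|R|^2}.\]
This is not yet the claimed bound, because $(2|R|-|N(R)|)|N(R)|$ exceeds $|R||N(R)|$ whenever $N(R)\ne R$. I would therefore use the exact formula of Theorem \ref{l2} rather than Theorem \ref{t1}. With $Z(R)=R$ it becomes
\[P_{nil}(R)=\frac{|R||N(R)|+\sum_{x\in R\setminus N(R)}|n(x)|}{|R|^2}.\]
It then suffices to show that $|n(x)|\ge |N(R)|$ for every $x\in R\setminus N(R)$, with strict inequality for at least one such $x$. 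Summing over the $|R|-|N(R)|$ non-nilpotent elements gives
\[\sum_{x\in R\setminus N(R)}|n(x)|>(|R|-|N(R)|)|N(R)|,\]
and adding $|R||N(R)|$ yields exactly $(2|R|-|N(R)|)|N(R)|$ in the numerator, with strict inequality.

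The inequality $|n(x)|\ge|N(R)|$ is immediate if $R$ is NI, since then $N(R)\subseteq n(x)$. In general $N(R)$ need not lie in $n(x)$, as the $M_2(\mathbb{Z}_2)$ example shows, so I would argue differently. Because $R$ has no identity, left multiplication $y\mapsto xy$ is not injective: by Lemma \ref{l5} every element is a zero divisor, and a finite ring in which left multiplication by every element were injective would have each such map bijective, forcing an identity. So $\ker(L_x)=\{y: xy=0\}$ is nonzero. Also $n(x)\supseteq \ker(L_x)$.

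Comparing $|\ker(L_x)|$ with $|N(R)|$ directly does not obviously work, so the main obstacle is this comparison for non-NI rings. My first attempt would be to view $n(x)=L_x^{-1}(N(R)\cap xR)$ as a union of cosets of $\ker(L_x)$, which gives $|n(x)|=|\ker L_x|\cdot|N(R)\cap xR|$.

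If that comparison fails, a cleaner route for strictness is a global argument. Summing over all $x$, one gets $\sum_x|n(x)|\ge \sum_x|\ker L_x|=|R|^2 zp(R)$, and Proposition \ref{p2} links this to the nullity degree.

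I expect that, if the paper intends the NI setting of this subsection, the proof reduces to $N(R)\subseteq n(x)$ together with strictness. Strictness would come from choosing $x\notin N(R)$ with a nonzero annihilator $y\notin N(R)$. Such a $y$ lies in $n(x)\setminus N(R)$, so $|n(x)|>|N(R)|$.
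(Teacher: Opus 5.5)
Your reduction is the right one for the NI setting of this subsection. With $Z(R)=R$ and $N(R)\subseteq n(x)$, Theorem~\ref{l2} gives $|R|^2P_{nil}(R)-(2|R|-|N(R)|)|N(R)|=\sum_{x\in R\setminus N(R)}|n(x)\setminus N(R)|$. You are also right that Theorem~\ref{t1}(1) only yields $|N(R)|/|R|$. The paper's one-line appeal to Theorem~\ref{t1} only shows that the two bounds there differ when $N(R)\neq Z(R)=R$, which does not give the claim either.

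The gap is strictness. You need $x,y\notin N(R)$ with $xy\in N(R)$, and you never produce them. They need not exist, and the statement is in fact false. Take $R=\mathbb{Z}_2\times 2\mathbb{Z}_4$, where $2\mathbb{Z}_4=\{0,2\}$ has zero multiplication. Then $R$ has no identity, is commutative (hence NI) and is not nil, with $N(R)=\{0\}\times 2\mathbb{Z}_4$. By Proposition~\ref{l3}, $P_{nil}(R)=\frac34\cdot 1=\frac34=\frac{(2\cdot 4-2)\cdot 2}{4^2}$.

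In general, for NI $R$ one has $xy\in N(R)$ iff $\bar x\bar y=0$ in $R/N(R)$. Hence $P_{nil}(R)=zp(R/N(R))\ge \frac{2|R/N(R)|-1}{|R/N(R)|^2}=\frac{(2|R|-|N(R)|)|N(R)|}{|R|^2}$. Equality holds exactly when $R/N(R)$ has no nonzero zero divisors, i.e., is a field or zero. Whether $R$ itself has an identity plays no role. So only the non-strict inequality is provable, and strictness holds iff $R/N(R)$ is not a field.

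Your auxiliary claim that $\ker(L_x)\neq 0$ for every $x$ is a quantifier slip. The fact that not all $L_x$ can be bijective in a finite ring without identity only says that some $L_x$ is non-injective. For instance, take $R=\left\{\begin{bmatrix}a&b\\0&0\end{bmatrix}: a,b\in\mathbb{Z}_2\right\}$. It has no right identity, and $N(R)=\left\{0,\begin{bmatrix}0&1\\0&0\end{bmatrix}\right\}$ is an ideal. Left multiplication by $\begin{bmatrix}1&0\\0&0\end{bmatrix}$ is the identity map. This ring is a second counterexample: $xy$ is nilpotent iff the product of the $(1,1)$ entries is $0$, so $P_{nil}(R)=\frac34$, again equal to the bound.

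Moreover, even a nonzero $\ker L_x$ may consist entirely of nilpotents, as in $\mathbb{Z}_2\times 2\mathbb{Z}_4$. Then it supplies no elements of $n(x)\setminus N(R)$. The non-NI detour is unnecessary anyway, since this subsection assumes $R$ is NI.
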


\begin{example}
	(1) For the ring $T_2(\mathbb{Z}_2)$, $Z(T_2(\mathbb{Z}_2))=\left\{\begin{bmatrix}
			0 & 0\\0 & 0
		\end{bmatrix},\begin{bmatrix}
		1 & 0\\0 & 0
		\end{bmatrix},\begin{bmatrix}
		0 & 1\\0 & 0
		\end{bmatrix},\right.$ $\left.\begin{bmatrix}
		0 & 0\\0 & 1
		\end{bmatrix},\begin{bmatrix}
		1 & 1\\0 & 0
		\end{bmatrix},\begin{bmatrix}
		0 & 1\\0 & 1
		\end{bmatrix}\right\}$ and $N(T_2(\mathbb{Z}_2))=\left\{\begin{bmatrix}
		0 & 0\\0 & 0
		\end{bmatrix},\begin{bmatrix}
		0 & 1\\0 & 0
		\end{bmatrix}\right\}$. Therefore, $1/8\le P_{nil}(T_2(\mathbb{Z}_2))\le 11/16$. Observe that a simple calculation yields that $P_{nil}(T_2(\mathbb{Z}_2))=9/16$.

        \medskip
                
		\noindent(2) For the ring $\mathbb{Z}_8$, $Z(\mathbb{Z}_8)=N(\mathbb{Z}_8)=\{\overline{0},\overline{2},\overline{4},\overline{6}\}$. So, $P_{nil}(\mathbb{Z}_8)=3/4=\{(2.8-4)4\}/8^2$.
	
\end{example}

\medspace

\subsection{NPP for commutative rings.}\label{ss2}
In this subsection, we investigate the NPP for finite commutative rings with identity. Throughout, we shall use \cite[Theorem~8.7 ]{atiyah} ({\it structure theorem for finite commutative rings}), which says that any finite commutative ring with identity is unique (up to isomorphism) direct product of local rings. We begin by stating the following lemma, which will be used subsequently.

\begin{lemma}\label{l6}
	Let $R$ be a local ring. Then $N(R)$ becomes the unique maximal ideal of $R$. Moreover, we have  $|R|=p^\alpha$  for some prime $p$ and integer $\alpha>0$.
\end{lemma}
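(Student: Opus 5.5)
We work in the setting of this subsection: $R$ is a finite commutative local ring with identity and unique maximal ideal $\mathfrak m$. The plan is to show $N(R)=\mathfrak m$ by two inclusions, and then read off $|R|$ from a filtration of $\mathfrak m$ by its powers.

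\textbf{Step 1: $N(R)\subseteq\mathfrak m$.} A nilpotent element is never a unit: if $x^k=0$ and $xu=1$, then $1=(xu)^k=x^ku^k=0$, which is impossible in a nonzero ring. In a local ring the non-units are exactly $\mathfrak m$, since any non-unit generates a proper ideal, and every proper ideal lies in the unique maximal ideal. Hence $N(R)\subseteq\mathfrak m$.

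\textbf{Step 2: $\mathfrak m\subseteq N(R)$.} This is the main point, and I would prove it using finiteness. Let $x\in\mathfrak m$. The powers $x,x^2,\dots$ are finitely many, so $x^n=x^{n+k}$ for some $n,k\ge1$. Choosing a multiple $m=n k'\ge n$ of $k$, we get $e=x^{m}$ with $e^2=x^{2m}=x^{m}=e$, so $e$ is idempotent. Since $e(1-e)=0$, the elements $e$ and $1-e$ cannot both lie in $\mathfrak m$, because their sum is $1$. As $e\in\mathfrak m$, the element $1-e$ is a unit, and then $e(1-e)=0$ forces $e=0$. Thus $x^m=0$ and $x$ is nilpotent.

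The alternative route via Lemma~\ref{l5} would also work. An element of $\mathfrak m$ is a non-unit, hence a zero divisor, but getting nilpotence from that still needs the same finiteness trick. So I will use the idempotent argument directly. Combining Steps 1 and 2, $N(R)=\mathfrak m$, which is therefore the unique maximal ideal.

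\textbf{Step 3: $|R|=p^\alpha$.} The residue ring $R/\mathfrak m$ is a finite field, so $|R/\mathfrak m|=p^k$ for some prime $p$ and some $k\ge1$. Since $\mathfrak m=N(R)$ is a finitely generated ideal of nilpotents, $\mathfrak m^t=0$ for some $t$. Each quotient $\mathfrak m^i/\mathfrak m^{i+1}$ is a finite-dimensional vector space over $R/\mathfrak m$, so its order is a power of $p^k$. The chain
\[R\supseteq\mathfrak m\supseteq\mathfrak m^2\supseteq\cdots\supseteq\mathfrak m^t=0\]
then gives $|R|=|R/\mathfrak m|\prod_{i=1}^{t-1}|\mathfrak m^i/\mathfrak m^{i+1}|$, a product of powers of $p$. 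Hence $|R|=p^\alpha$ with $\alpha\ge k\ge1$.

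A simpler alternative for Step 3: the additive order of $1$ is a characteristic $c$. If $c=ab$ with $\gcd(a,b)=1$ and $a,b>1$, then the Chinese remainder theorem would produce a nontrivial idempotent, contradicting Step 2's observation that a local ring has no idempotents other than $0,1$. So $c=p^s$. Then every element of $R$ has additive order a power of $p$, and by Cauchy's theorem $|R|$ is a power of $p$.
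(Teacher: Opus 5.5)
Your proof is correct. It differs from the paper mainly in that the paper gives no argument of its own: it simply cites \cite[Theorem~2]{ragha}, Raghavendran's structure theorem for finite rings with identity whose zero divisors form an ideal. That theorem does not need commutativity, and it gives more than the lemma states: the maximal ideal is the nilpotent Jacobson radical, the residue field is $\mathbb{F}_{p^r}$, both $|R|$ and $|N(R)|$ are powers of $p^r$, and the characteristic is a power of $p$.

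Your route is self-contained and uses only finiteness and commutativity. The idempotent trick in Step~2 shows directly that every non-unit is nilpotent. There you should take, e.g., $m=nk$, so that $m\ge n$ and $k\mid m$; your $m=nk'$ is presumably a typo. Your filtration by powers of $\mathfrak m$ in Step~3 recovers exactly the order information of the cited result, namely that $|R|$ and $|\mathfrak m|$ are powers of $|R/\mathfrak m|=p^k$.

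The characteristic/CRT/Cauchy variant is shorter but only shows that $|R|$ is a power of $p$. That is all the lemma asserts and all the paper later uses: in Corollary~\ref{c2}, $|N(R)|=p^i$ then follows by Lagrange. If you use that variant, add the one line showing that a local ring has no idempotent $e\neq 0,1$. Step~2 handles $e\in\mathfrak m$; if $e\notin\mathfrak m$, then $e$ is a unit, so $e^2=e$ forces $e=1$.
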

\begin{proof}
See \cite[Theorem 2]{ragha}.
\end{proof}

\medspace

Now, before we establish the NPP for local rings, we recall the class of rings $\mathfrak{R}_{p^k}$ as mentioned in the introduction. Note that, there exists at least one ring which belongs to $\mathfrak{R}_{p^k}$ for any prime $p$ and integer $k>0$, for example, $\mathbb{F}_{p^k}$. Again, if $R$ is local, then $R\in\mathfrak{R}_{p^k}$ for some prime $p$ and integer $k>0$.

\begin{theorem} \label{t2}
	Let $R$ be a ring. Then
	\begin{enumerate}
		\item[{(1)}]  $R$ is local if and only if
		\[P_{nil}(R)=\frac{(2|R| -|N(R)|)|N(R)|}{|R|^{2}}.\]
		
		\item[{(2)}] Furthermore, if $R$ is local and $p, \alpha$ are prime and positive integer, respectively, then $P_{nil}(R)=\frac{2p^k-1}{p^{2k}}$ if and only if $R\in\mathfrak{R}_{p^k}$.
	\end{enumerate}
\end{theorem}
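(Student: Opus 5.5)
The plan is to work, as throughout this subsection, with finite commutative rings with identity. For the forward direction of (1), I will use Lemma \ref{l6}: if $R$ is local, then $N(R)$ is the unique maximal ideal. By Lemma \ref{l5}, every element of $R$ is either a unit or a zero divisor. The non-units of a local ring are exactly the elements of the maximal ideal, and a unit is never a zero divisor, so $Z(R)=N(R)$. The equality clause of Theorem \ref{t1} then gives the stated value directly.

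For the converse of (1), I will argue by contraposition. By the structure theorem, write $R\cong R_1\times\cdots\times R_t$ with each $R_i$ local, and suppose $t\ge 2$. Set $a_i=|N(R_i)|/|R_i|=1/|R_i/N(R_i)|$, so that $0<a_i\le 1/2$, and let $f(a)=2a-a^2$.

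By the forward direction, $P_{nil}(R_i)=f(a_i)$. Hence Proposition \ref{l3} gives $P_{nil}(R)=\prod_i f(a_i)$. On the other hand, $N(R)=\prod_i N(R_i)$, so $|N(R)|/|R|=\prod_i a_i$, and the right-hand side of the claimed formula equals $f\bigl(\prod_i a_i\bigr)$.

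The key inequality is that $f(a)f(b)>f(ab)$ whenever $0<a,b<1$. This holds because
\[
f(a)f(b)-f(ab)=ab\bigl[(2-a)(2-b)-(2-ab)\bigr]=2ab(1-a)(1-b)>0 .
\]
An induction on $t$ then gives $\prod_i f(a_i)>f\bigl(\prod_i a_i\bigr)$. The inductive step combines the inequality just proved with the monotonicity of $f$ on $(0,1)$ and the fact that all products stay in $(0,1)$. So for $t\ge 2$ the equality fails, which forces $t=1$, i.e. $R$ is local.

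For (2), let $R$ be local and set $x=|R|/|N(R)|=|R/N(R)|$, the order of the residue field. Dividing the formula in (1) through by $|N(R)|^2$ rewrites it as $P_{nil}(R)=(2x-1)/x^2$. The map $g(x)=(2x-1)/x^2$ has derivative $g'(x)=2(1-x)/x^3<0$ for $x>1$, so $g$ is injective on $x\ge 2$. Therefore $P_{nil}(R)=(2p^k-1)/p^{2k}$ if and only if $|R/N(R)|=p^k$. Since a finite field is determined up to isomorphism by its order, this holds if and only if the residue field is $\mathbb{F}_{p^k}$, that is, $R\in\mathfrak{R}_{p^k}$.

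The main obstacle is the converse in (1), specifically the product inequality. I expect it to go through via the factorization above, taking care with the induction so that the strict inequality and the monotonicity of $f$ are applied to values that stay inside $(0,1)$.
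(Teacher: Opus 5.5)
Your proof is correct. For the converse of (1) it takes a genuinely different, and more complete, route than the paper.

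The paper handles both directions of (1) in one line: a commutative ring with identity is local iff $Z(R)=N(R)$, so ``from Theorem~\ref{t1}'' the formula holds iff $R$ is local. Your forward direction is exactly this. But Theorem~\ref{t1} only characterises equality in the bound $(2|R|-|Z(R)|)|N(R)|/|R|^2$. The value $(2|R|-|N(R)|)|N(R)|/|R|^2$ always lies between the two bounds there: it exceeds the lower one by $(|Z(R)|-|N(R)|)|N(R)|/|R|^2$ and falls short of the upper one by $(|Z(R)|-|N(R)|)^2/|R|^2$. So attaining it does not by itself force $Z(R)=N(R)$, and the paper's citation really only yields ``local $\Rightarrow$ formula''.

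Your argument supplies the missing direction. Using the structure theorem, Proposition~\ref{l3}, $N(\prod_i R_i)=\prod_i N(R_i)$ and the identity $f(a)f(b)-f(ab)=2ab(1-a)(1-b)$, you show the formula fails strictly for every product of $t\ge 2$ local rings. Two small remarks:
\begin{itemize}
\item The induction needs only $f>0$ on $(0,1)$, not monotonicity of $f$.
\item Your contraposition gives $t\le 1$, not $t=1$. The case $t=0$, i.e.\ $R=\{0\}$, has both sides equal to $1$ and is not local, so it must be excluded by the convention $1\neq 0$. This is an edge case the statement itself also overlooks.
\end{itemize}

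For (2), the paper writes the residue field as $\mathbb{F}_{q^s}$, uses $\gcd(q^{2s},2q^s-1)=1$ to get $q^{2s}\mid p^{2k}$, and then argues symmetrically. Your observation is that $g(x)=(2x-1)/x^2=1-(1-1/x)^2$ is strictly decreasing for $x\ge 1$. This is shorter and gives $|R/N(R)|=p^k$ directly, with no prime-power arithmetic.
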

\begin{proof}
	(1) Since a commutative ring $R$ with identity is local if and only if every zero divisor is nilpotent, therefore, from Theorem \ref{t1}, we have
		\[P_{nil}(R)= \frac{(2|R|-|N(R)|)|N(R)|}{|R|^2}\]
	if and only if $R$ is local.
	
	\medspace
	
	(2) From (1), we have 
	$P_{nil}(R)= \frac{2|R/N(R)|-1}{|R/N(R)|^2}$
	if and only if $R$ is local. Now, if $R\in\mathfrak{R}_{p^k}$, then $|R/N(R)|=p^k$, whence the result holds. Conversely, let $P_{nil}(R)=\frac{2p^k-1}{p^{2k}}$. Since $R$ is local, therefore, $R\in\mathfrak{R}_{q^s}$ for some prime $q$ and integer $s>0$. Thus,
	$P_{nil}(R)=\frac{2q^s-1}{q^{2s}}=\frac{2p^k-1}{p^{2k}},$
	which implies $q^{2s}\big|p^{2k}(2q^{s} -1)$. But $gcd(q^{2s},2q^{s} -1)$ $=1$. So, $q^{2s} ~\big|~p^{2k}$, and therefore, $q=p$ and $s\le k$. Similarly, we can prove $k\le s$, and hence, $k=s$. 
\end{proof}

\medspace

This leads to the following corollaries.

\begin{corollary} \label{o2}
	Let $\mathcal{S}_{local}$ be the set of all values of $P_{nil}(R)$ for all local rings $R$. Then
	\[\mathcal{S}_{local}=\left\{\frac{2p^k-1}{p^{2k}} : p\in\mathbb{P}, k>0 \right\}.\]
\end{corollary}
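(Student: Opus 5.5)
The corollary asserts an equality of two sets, so I will prove both inclusions, relying on Theorem \ref{t2} throughout. Here ``local ring'' means a finite commutative local ring with identity, as in this subsection.

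For the inclusion $\mathcal{S}_{local}\subseteq\{(2p^k-1)/p^{2k}\}$, let $R$ be local. Then $R\in\mathfrak{R}_{p^k}$ for some prime $p$ and some $k>0$, because the residue field $R/N(R)$ is a finite field $\mathbb{F}_{p^k}$; this was already noted before Theorem \ref{t2}, using Lemma \ref{l6}. Theorem \ref{t2}(2) then gives $P_{nil}(R)=(2p^k-1)/p^{2k}$. The same value can also be read off directly from Theorem \ref{t2}(1): dividing numerator and denominator by $|N(R)|^2$, and using that $N(R)$ is the maximal ideal, we get
\[P_{nil}(R)=\frac{2|R/N(R)|-1}{|R/N(R)|^2}\quad\text{with } |R/N(R)|=p^k.\]

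For the reverse inclusion, fix a prime $p$ and an integer $k>0$. I need a local ring realizing the value $(2p^k-1)/p^{2k}$, and the field $\mathbb{F}_{p^k}$ works. It is local, and it lies in $\mathfrak{R}_{p^k}$, so Theorem \ref{t2}(2) gives $P_{nil}(\mathbb{F}_{p^k})=(2p^k-1)/p^{2k}$. Alternatively, Corollary \ref{c6} gives the same value directly with $n=p^k$, since its equality case is exactly $R\cong\mathbb{F}_n$.

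There is no real obstacle here. The only point needing care is to confirm that every local ring has residue field of prime-power order $p^k$ with $k\ge1$. This holds because a finite field always has prime-power order, and the residue field is nonzero since $N(R)$ is a proper ideal.
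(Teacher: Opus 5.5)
Your proof is correct and takes the same approach as the paper, which gives no separate proof and treats the corollary as immediate. The inclusion $\subseteq$ follows from Theorem \ref{t2}(2), since every local ring lies in some $\mathfrak{R}_{p^k}$. The reverse inclusion follows from the remark just before Theorem \ref{t2} that $\mathbb{F}_{p^k}\in\mathfrak{R}_{p^k}$ for every prime $p$ and every $k>0$.
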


\begin{corollary} \label{c2}
	Let $R$ be a local ring such that $|R|=p^\alpha$, where $p$ is a prime and $\alpha>0$ is an integer. Then
	\[ P_{nil}(R)\le \frac{2p-1}{p^2}.\]
	The equality holds if and only if $R\in\mathfrak{R}_p$.
\end{corollary}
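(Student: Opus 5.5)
The plan is to combine Theorem \ref{t2}(2) with an elementary monotonicity estimate. Since $R$ is local, $R\in\mathfrak{R}_{q^k}$ for some prime $q$ and integer $k>0$. Then $R/N(R)\cong\mathbb{F}_{q^k}$ by Lemma \ref{l6}, and $|R|=|N(R)|\,q^k$. Because $|R|=p^\alpha$, the prime $q$ must equal $p$. So $R\in\mathfrak{R}_{p^k}$ for some $k\ge 1$, and Theorem \ref{t2}(2) gives
\[P_{nil}(R)=\frac{2p^k-1}{p^{2k}}.\]

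Next I would set $f(t)=(2t-1)/t^2$ for real $t\ge 1$. Its derivative is $f'(t)=2(1-t)/t^3$, which is negative for $t>1$, so $f$ is strictly decreasing on $[1,\infty)$. Since $p^k\ge p>1$, we get $f(p^k)\le f(p)=(2p-1)/p^2$. Strictness of the monotonicity gives equality exactly when $p^k=p$, that is, when $k=1$. Alternatively, I could verify the inequality directly: $(2p-1)p^{2k-2}\ge 2p^k-1$ is equivalent to $(p^{k-1}-1)^2\ge 0$ after dividing by $p^{2k-2}$ and rearranging. This version also shows that equality holds iff $k=1$.

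For the equality statement: if $R\in\mathfrak{R}_p$, then Theorem \ref{t2}(2) with $k=1$ gives equality directly. Conversely, equality forces $k=1$ by the previous paragraph, so $R\in\mathfrak{R}_p$. One could also invoke the uniqueness argument from the proof of Theorem \ref{t2}(2).

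The only step needing care is identifying the prime of the residue field with the prime $p$ in $|R|=p^\alpha$. This follows from $|R|=|N(R)|\,|R/N(R)|$. Everything else is routine.
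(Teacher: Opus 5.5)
Your proof is correct and is essentially the paper's argument: the paper writes $P_{nil}(R)=(2p^\alpha-p^i)p^i/p^{2\alpha}$ with $|N(R)|=p^i$, $0\le i\le\alpha-1$, and uses that this increases with $i$, which is your monotonicity of $(2t-1)/t^2$ under the substitution $t=p^{\alpha-i}=|R/N(R)|$. The equality case rests on Theorem \ref{t2}(2) and this monotonicity in both versions. One small slip in your optional algebraic check: $(2p-1)p^{2k-2}-(2p^k-1)$ factors as $(p^{k-1}-1)\bigl((2p-1)p^{k-1}-1\bigr)$ rather than reducing to $(p^{k-1}-1)^2$, but it is still nonnegative and vanishes exactly when $k=1$, so nothing is affected.
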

\begin{proof}
	From Theorem \ref{t2}(1), we have 
	\[P_{nil}(R)=\frac{(2p^\alpha -p^i)p^i}{p^{2\alpha}},\]
	where $|N(R)|=p^i$ for some $0\le i\le \alpha-1$. Since for integers $0\le i<j\le \alpha -1$, we get $(2p^\alpha-p^i)p^i<(2p^\alpha-p^j)p^j$, therefore,  the required inequality follows. Further, the equality holds from Theorem \ref{t2}(2).
\end{proof}

\medspace

The following result is a consequence of Proposition \ref{l3} and Theorem \ref{t2}.

\begin{theorem} \label{t3}
	Let $R$ be a ring such that $R=R_1\times R_2\times ... \times R_t,~t\ge 1$, where $R_i\in\mathfrak{R}_{p_{i}^{k_i}}$, $p_i\in\mathbb{P},~k_i> 0$ for $1\le i\le t$. Then
	\[
	P_{nil}(R)=\prod\limits_{i=1}^{t}\frac{2p^{k_i}_i-1}{p^{2k_i}_i}.
	\]
\end{theorem}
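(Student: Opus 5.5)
The plan is to combine the multiplicativity of the nilpotent product probability over direct products with the explicit formula for local rings. First, I will use induction on $t$ to extend Proposition \ref{l3} from two factors to $t$ factors. The base case $t=1$ is trivial. For the inductive step, I write $R_1\times\cdots\times R_t\cong (R_1\times\cdots\times R_{t-1})\times R_t$ and apply Proposition \ref{l3} to this two-factor decomposition. This gives
\[
P_{nil}(R)=\prod_{i=1}^{t}P_{nil}(R_i).
\]
Since $P_{nil}$ is defined purely in terms of the ring structure, it is invariant under ring isomorphism, so the regrouping of factors is harmless.

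Second, I evaluate each factor. By hypothesis $R_i\in\mathfrak{R}_{p_i^{k_i}}$, so $R_i$ is a finite commutative local ring with identity and residue field $\mathbb{F}_{p_i^{k_i}}$. Theorem \ref{t2}(2) applies directly and gives
\[
P_{nil}(R_i)=\frac{2p_i^{k_i}-1}{p_i^{2k_i}}.
\]
For completeness I would recall where this comes from. By Lemma \ref{l6}, $N(R_i)$ is the maximal ideal, so $|R_i|/|N(R_i)|=p_i^{k_i}$. Dividing the numerator and denominator of the formula in Theorem \ref{t2}(1) by $|N(R_i)|^2$ then yields the displayed value. Substituting these values into the product completes the proof.

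There is no real obstacle here. The only point needing care is checking that each $R_i$ satisfies the standing hypotheses of Theorem \ref{t2}, namely that it is commutative with identity and local. This holds by the definition of $\mathfrak{R}_{p^k}$.
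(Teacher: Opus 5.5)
Your proposal is correct and matches the paper's approach: the paper states this theorem as a direct consequence of Proposition \ref{l3} (iterated to $t$ factors) combined with Theorem \ref{t2} applied to each local factor $R_i$. Your induction on $t$ and your reduction of Theorem \ref{t2}(1) via $|R_i|/|N(R_i)|=p_i^{k_i}$ simply make those two steps explicit.
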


\begin{example} \label{e2}
	We know that for any prime $p$ and positive integer $\alpha$,  $\mathbb{Z}_{p^\alpha}\in\mathfrak{R}_p$, and therefore, $P_{nil}(\mathbb{Z}_{p^\alpha})=\frac{2p-1}{p^2}$. In particular, if $n=\prod\limits_{i=1}^{t}p_i^{\alpha_i}$,  where $p_i'$s are distinct primes and $\alpha_i'$s are positive integers, then $\mathbb{Z}_n\cong\mathbb{Z}_{p_1^{\alpha_1}}\times \mathbb{Z}_{p_2^{\alpha_2}}\times...\times \mathbb{Z}_{p_t^{\alpha_t}}$, and hence, $P_{nil}(\mathbb{Z}_n)=\prod\limits_{i=1}^{t}\frac{2p_i-1}{p^2_i}.$
\end{example}

\begin{remark}\label{r2}
	Note that, Theorem \ref{t2}(2) need not hold for non-local rings. For example, let $R=(\mathbb{Z}_2)^3 \times \mathbb{Z}_3$. Then we have  $P_{nil}(R)=15/64=(2\cdot8-1)/8^2$, but $R\notin \mathfrak{R}_8$.
\end{remark}

\medspace

Now, we obtain the following corollaries that follow from the preceding results. 

\begin{corollary} \label{o3}
	Let $\mathcal{S}$ be the set of all values of $P_{nil}(R)$ for all rings $R$. Then
	 \[\mathcal{S}=\left\{\displaystyle\prod_{i=1}^{t}\frac{2p_i^{k_i}-1}{p_i^{2k_i}} : p_i\in\mathbb{P};t,k_i>0\right\}.\]
\end{corollary}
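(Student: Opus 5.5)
The statement is about commutative rings with identity; this is the standing convention of Subsection \ref{ss2}, and $\mathcal{S}$ should be read as ranging over finite commutative rings with identity. With that reading, I will prove the set equality by double inclusion, using the structure theorem \cite[Theorem~8.7]{atiyah} together with Theorem \ref{t3}.

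\emph{Inclusion $\subseteq$.} Let $R$ be a finite commutative ring with identity. If $R$ is nonzero, the structure theorem writes $R\cong R_1\times\cdots\times R_t$ with $t\ge 1$ and each $R_i$ local. By Lemma \ref{l6} and the remark preceding Theorem \ref{t2}, each $R_i$ lies in $\mathfrak{R}_{p_i^{k_i}}$ for some prime $p_i$ and some $k_i>0$. Theorem \ref{t3} then gives $P_{nil}(R)=\prod_{i=1}^t \frac{2p_i^{k_i}-1}{p_i^{2k_i}}$, so $P_{nil}(R)\in\mathcal{S}$. Only $t>0$ occurs here. The zero ring gives $P_{nil}=1$; I expect it is excluded by the convention that rings with identity have $1\neq 0$, and I will state this explicitly.

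\emph{Inclusion $\supseteq$.} Take primes $p_1,\dots,p_t$ and positive integers $k_1,\dots,k_t$, where repetitions are allowed. Set $R=\mathbb{F}_{p_1^{k_1}}\times\cdots\times\mathbb{F}_{p_t^{k_t}}$. This is a finite commutative ring with identity, and each factor lies in $\mathfrak{R}_{p_i^{k_i}}$. Theorem \ref{t3}, or equivalently Proposition \ref{l3} combined with Theorem \ref{t2}(2), shows that $P_{nil}(R)$ equals the prescribed product.

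No step here is a real obstacle; the content is already carried by Theorem \ref{t3}. The only point needing care is the scope of the statement. If ``all rings'' meant arbitrary finite rings, the claim would fail: $P_{nil}=1$ for a nonzero nil ring, and noncommutative examples such as $T_2(\mathbb{Z}_2)$ with value $9/16$ lie outside this product set. So the proof must explicitly invoke the subsection's standing hypothesis that rings are commutative with identity.
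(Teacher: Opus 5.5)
Your double-inclusion argument is correct, and it is the justification the paper leaves implicit: the structure theorem with Theorem \ref{t3} gives $\subseteq$, and products of finite fields realize every element of $\mathcal{S}$. Your caveat about the zero ring is genuinely needed, since its value is $1$ while every element of $\mathcal{S}$ is at most $3/4$.

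One side remark is false, though. By Theorem \ref{t5}, $P_{nil}(T_2(\mathbb{Z}_2))=(3/4)^2=9/16$, which is the element of $\mathcal{S}$ with $t=2$, $p_1=p_2=2$, $k_1=k_2=1$ (row 2 of Table \ref{tab1}), so it does not lie outside the set. The correct witness that the formula fails for arbitrary rings is a nonzero nil ring, with value $1$.

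In fact noncommutativity alone never leaves $\mathcal{S}$ for nonzero rings with identity. First, $P_{nil}(R)=P_{nil}(R/J(R))$, because $J(R)$ is nilpotent. Second, $P_{nil}(M_n(\mathbb{F}_q))=(2q^n-1)/q^{2n}$: a matrix $A$ of rank $r$ admits exactly $q^{n^2-r}$ matrices $B$ with $AB$ nilpotent, and $q^{-r}$ averages to $q^{-n}(2-q^{-n})$ over all $A$.
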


\begin{corollary}\label{c11}
	Let $R$ be a ring such that $|R|=p^\alpha$ for some prime $p$ and integer $\alpha>0$. Then
	\begin{enumerate}
		\item[{(1)}] $\displaystyle P_{nil}(R)\ge\frac{2p^\alpha-1}{p^{2\alpha}}$, with the equality if and only if $R$ is a field.
		\item[{(2)}] $\displaystyle P_{nil}(R)\le \frac{2p-1}{p^2}$, with the equality if and only if $R\in\mathfrak{R}_p$.
	\end{enumerate}
\end{corollary}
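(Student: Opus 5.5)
In this subsection $R$ is a finite commutative ring with identity, so we can use the structure theorem. The ring $R$ has order $p^\alpha$, hence $R\cong R_1\times\cdots\times R_t$ with each $R_i$ local. The order of each $R_i$ divides $p^\alpha$, so by Lemma \ref{l6} each $R_i\in\mathfrak{R}_{p^{k_i}}$ for the same prime $p$. Write $|R_i|=p^{\alpha_i}$ with $\sum\alpha_i=\alpha$. The residue field $R_i/N(R_i)$ has order $p^{k_i}$, which gives $k_i\le\alpha_i$. Theorem \ref{t3} then yields
\[P_{nil}(R)=\prod_{i=1}^t\frac{2p^{k_i}-1}{p^{2k_i}}.\]
The plan is to bound this product in both directions by elementary estimates on the function $f(q)=(2q-1)/q^2$.

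For (2), note that $f$ is strictly decreasing for $q\ge1$ and $f(q)\le 1$ always; in fact $f(q)<1$ once $q\ge2$. Each factor therefore satisfies $f(p^{k_i})\le f(p)<1$, so the product is at most $f(p)$, as required. Equality forces $t=1$, since a second factor is $<1$ and would make the inequality strict. It also forces $k_1=1$, by strict monotonicity of $f$. Hence $R$ is local with residue field $\mathbb{F}_p$, that is, $R\in\mathfrak{R}_p$. Conversely, Corollary \ref{c2} (or Theorem \ref{t2}(2)) gives equality for every $R\in\mathfrak{R}_p$.

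For (1), the cleanest route is Corollary \ref{c6}, which gives $P_{nil}(R)\ge(2n-1)/n^2$ with $n=p^\alpha$, with equality iff $R\cong\mathbb{F}_{p^\alpha}$. This is exactly the claim, since $R\neq\{0\}$. I would also check the bound directly as a sanity check. One has $f(p^{k_i})\ge f(p^{\alpha_i})$ because $k_i\le\alpha_i$. For $a,b\ge 1$ one has $f(ab)\le f(a)f(b)$, since
\[(2ab-1)\le(2a-1)(2b-1) \iff 0\le 2(a-1)(b-1),\]
with equality iff $a=1$ or $b=1$. Iterating gives $\prod f(p^{\alpha_i})\ge f(p^\alpha)$. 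Equality requires $t=1$, because each $p^{\alpha_i}>1$ makes the multiplicative step strict, and it requires $k_1=\alpha_1$, which makes $N(R)=0$ so that $R$ is a field.

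The only delicate point is the equality analysis. For that, the strict inequality $2(a-1)(b-1)>0$ when $a,b\ge2$ and the strict monotonicity of $f$ are what is needed, and both are routine. In (1) I would simply invoke Corollary \ref{c6} to avoid redoing it.
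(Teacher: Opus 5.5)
Your proof is correct and follows essentially the same route as the paper: part (1) is Corollary \ref{c6} with $n=p^\alpha$ (using $R\neq\{0\}$), and part (2) is the local bound combined with the fact that additional local factors, each with NPP strictly less than $1$, make the product strictly smaller. The only difference is cosmetic: you phrase the local bound via the monotonicity of $(2q-1)/q^2$ in the residue field size rather than via $|N(R)|$ as in Corollary \ref{c2}, and your direct check of (1) through $f(ab)\le f(a)f(b)$ is a valid but optional extra.
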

\begin{proof}
	(1) This follows directly from Corollary \ref{c6}.
	
	(2) For local $R$, it is already proved in Corollary \ref{c2}. Suppose that $R$ is non-local. Then $R\cong R_1\times R_2\times ...\times R_t, t\ge 2$, where each $R_i$ is local with $P_{nil}(R_i)\le \frac{2p-1}{p^2}$, which implies $P_{nil}(R)<\frac{2p-1}{p^2}$.
\end{proof}

\begin{corollary} \label{c3}
	Let $R$ be a ring such that $|R|=\prod\limits_{i=1}^{t}p_i^{\alpha_i}$, where $p_i'$s are distinct primes and $\alpha_i'$s are positive integers. Then
	\begin{enumerate}
		\item[{(1)}] $P_{nil}(R)\ge\prod\limits_{i=1}^{t}\frac{2p_i^{\alpha_i}-1}{p_i^{2\alpha_i}}$, with the equality if and only if $R\cong \mathbb{F}_{p_1^{\alpha_1}}\times \mathbb{F}_{p_2^{\alpha_2}}\times ...\times \mathbb{F}_{p_t^{\alpha_t}}$.
		
		\item[{(2)}] $P_{nil}(R)\le\prod\limits_{i=1}^{t}\frac{2p_i-1}{p_i^2}$, with the equality if and only if $R\cong R_1\times R_2\times ...\times R_t$, where each $R_i\in\mathfrak{R}_{p_i}$.
	\end{enumerate}
\end{corollary}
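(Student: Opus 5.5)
By the structure theorem for finite commutative rings \cite[Theorem~8.7]{atiyah}, we write $R\cong A_1\times\cdots\times A_s$ with each $A_j$ local. By Lemma \ref{l6}, $|A_j|$ is a prime power. Grouping the local factors by their prime and comparing orders, $|R|=\prod_j|A_j|$ forces each $A_j$ to have order a power of some $p_i$. For each $i$, let $B_i$ be the product of the $A_j$ with $|A_j|$ a power of $p_i$. Then $R\cong B_1\times\cdots\times B_t$ with $|B_i|=p_i^{\alpha_i}$, by uniqueness of prime factorization. Proposition \ref{l3} gives
\[P_{nil}(R)=\prod_{i=1}^t P_{nil}(B_i).\]
It therefore suffices to bound each factor using Corollary \ref{c11}, since all quantities involved are positive.

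For (1), Corollary \ref{c11}(1) gives $P_{nil}(B_i)\ge (2p_i^{\alpha_i}-1)/p_i^{2\alpha_i}$, and multiplying these positive inequalities yields the bound. For equality: if some factor inequality were strict, the product would be strict, because all the other factors are positive. Hence equality holds if and only if each $B_i$ attains equality, that is, each $B_i$ is a field of order $p_i^{\alpha_i}$, i.e.\ $B_i\cong\mathbb{F}_{p_i^{\alpha_i}}$. Conversely, such a product attains the bound by Theorem \ref{t3}.

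For (2), we use Corollary \ref{c11}(2) in the same way: $P_{nil}(B_i)\le (2p_i-1)/p_i^2$, with equality if and only if $B_i\in\mathfrak{R}_{p_i}$. In particular, $B_i$ must be local, so it is a single factor $R_i$. The same strictness argument shows that equality in the product holds if and only if every $B_i\in\mathfrak{R}_{p_i}$. The converse direction is Theorem \ref{t3} with all $k_i=1$.

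The only point requiring care is the regrouping step, namely justifying that $B_i$ has order exactly $p_i^{\alpha_i}$ and that it is a ring to which Corollary \ref{c11} applies (a commutative ring with identity of prime-power order). This is routine, but it is where the distinctness of the $p_i$ is used.
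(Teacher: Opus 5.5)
Your proof is correct and follows the paper's argument: the paper also writes $R\cong R_1\times\cdots\times R_t$ with $|R_i|=p_i^{\alpha_i}$ and applies Corollary~\ref{c11} to each factor through the multiplicativity in Proposition~\ref{l3}. You additionally spell out two steps the paper leaves implicit: grouping the local factors by prime, and using positivity of the factors to settle the equality cases.
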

\begin{proof}
	Since $R\cong R_1\times R_2\times ... \times R_t$, where each $R_i$ is a ring of order $p_i^{\alpha_i}$, therefore, by Corollary \ref{c11} the required result holds.
\end{proof}

\begin{corollary}\label{c5}
	Let $R$ be a ring and $p$ be the largest prime that divides the order of $R$. Then
	\[P_{nil}(R)\le \frac{2p-1}{p^2}.\]
\end{corollary}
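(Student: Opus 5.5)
The plan is to reduce to the prime-power case and then compare factors. In this subsection $R$ is a finite commutative ring with identity, so by the structure theorem \cite[Theorem~8.7]{atiyah}, as already used in the proof of Corollary \ref{c3}, we can write $R\cong R_1\times R_2\times\cdots\times R_t$. Here $|R|=\prod_{i=1}^t p_i^{\alpha_i}$ with the $p_i$ distinct primes, and each $R_i$ is a ring of order $p_i^{\alpha_i}$. By Proposition \ref{l3},
\[
P_{nil}(R)=\prod_{i=1}^t P_{nil}(R_i).
\]

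Next we bound each factor. Corollary \ref{c11}(2) gives
\[
P_{nil}(R_i)\le \frac{2p_i-1}{p_i^2}\quad\text{for every } i.
\]
Every factor $P_{nil}(R_i)$ lies in $(0,1]$. So if $p=p_j$ is the largest prime dividing $|R|$, we drop all the other factors and get
\[
P_{nil}(R)\le P_{nil}(R_j)\le \frac{2p-1}{p^2}.
\]

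No step here is a real obstacle. The only points to check are that $P_{nil}(R_i)\le 1$, which is immediate from the definition, and that the decomposition by prime-power order is available for rings with identity. It is worth noting that the function $f(q)=(2q-1)/q^2$ is decreasing for $q\ge 1$. This means the bound coming from the largest prime is the weakest of the individual bounds, but it is the one the statement asks for, so we simply keep that factor and discard the rest. If we wanted more than the stated inequality, we could keep all the factors and recover Corollary \ref{c3}(2) instead.
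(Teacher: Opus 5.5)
Your proof is correct and matches the paper's implicit argument: decompose $R$ into prime-power-order factors, bound each factor by Corollary \ref{c11}(2) (equivalently, use Corollary \ref{c3}(2)), and discard the factors, which are at most $1$. One small slip in your closing remark: since $f(q)=(2q-1)/q^2$ is decreasing, the largest prime gives the smallest, i.e.\ the \emph{strongest}, single-factor bound, not the weakest.
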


\medspace

In the following, we determine the class of rings, which attains the maximum value of $P_{nil}(R)$.

\begin{theorem} \label{t6}	
	Let $R$ be a ring. Then
	\begin{enumerate}
		\item[{(1)}] For any prime $p$, $\displaystyle P_{nil}(R)=\frac{2p-1}{p^2}$ if and only if $R\in \mathfrak{R}_p$.
		
		\item[{(2)}] In particular, $P_{nil}(R)\le 3/4$, and the equality holds if and only if $R\in \mathfrak{R}_2$.
	\end{enumerate} 
\end{theorem}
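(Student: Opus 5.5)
The plan is to combine the product decomposition of Theorem \ref{t3} with Corollary \ref{c5}. Throughout, $R$ is a finite commutative ring with identity, as in this subsection. The \emph{if} direction of (1) is immediate: if $R\in\mathfrak{R}_p$, then Theorem \ref{t2}(2) with $k=1$ gives $P_{nil}(R)=(2p-1)/p^2$.

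For the converse, write $R=R_1\times\cdots\times R_t$ with $R_i\in\mathfrak{R}_{p_i^{k_i}}$ (structure theorem). By Theorem \ref{t3},
\[
\prod_{i=1}^t\frac{2p_i^{k_i}-1}{p_i^{2k_i}}=\frac{2p-1}{p^2}.
\]
Each factor lies in $(0,1)$, and the map $q\mapsto (2q-1)/q^2$ is strictly decreasing for $q\ge 2$.

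\textbf{Step 1: $p$ divides $|R|$.} Clear denominators to get
\[
p^2\prod_i(2p_i^{k_i}-1)=(2p-1)\prod_i p_i^{2k_i}.
\]
Since $\gcd(p,2p-1)=1$, we get $p^2\mid\prod_i p_i^{2k_i}$, so $p=p_j$ for some $j$.

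\textbf{Step 2: $p$ is the largest prime dividing $|R|$.} Let $q$ be the largest prime among the $p_i$. Then $q\ge p$, and Corollary \ref{c5} gives $(2p-1)/p^2=P_{nil}(R)\le(2q-1)/q^2$. By monotonicity $q\le p$, hence $q=p$.

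\textbf{Step 3: only one factor occurs.} Now
\[
\frac{2p-1}{p^2}=P_{nil}(R)\le\frac{2p_j^{k_j}-1}{p_j^{2k_j}}\le\frac{2p-1}{p^2},
\]
where the first inequality holds because all other factors are at most $1$, and the second uses $p_j^{k_j}\ge p$. Equality must hold throughout. This forces $k_j=1$, and every other factor must equal $1$. That is impossible, since each factor is at most $3/4$. Hence $t=1$ and $R=R_1\in\mathfrak{R}_p$.

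The main subtlety is this step: it relies on strict monotonicity of $(2q-1)/q^2$ and on each factor being strictly less than $1$. Both are elementary.

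\textbf{Part (2).} Every $R$ has $P_{nil}(R)\le(2q-1)/q^2\le 3/4$ by Corollary \ref{c5}, where $q$ is the largest prime divisor of $|R|$; the second inequality uses monotonicity with $q\ge2$. The trivial ring is excluded from this argument, since identity is assumed to be nonzero. If $P_{nil}(R)=3/4=(2\cdot2-1)/2^2$, then part (1) with $p=2$ gives $R\in\mathfrak{R}_2$. The converse follows from part (1) as well.
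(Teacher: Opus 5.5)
Your proof is correct and follows essentially the paper's argument: the same divisibility step places $p$ among the $p_i$, and then the $p$-factor is bounded by $(2p-1)/p^2$ while every other factor is strictly less than $1$, with your equality chain simply making explicit the $k_j=1$ conclusion that the paper leaves tacitly to Theorem \ref{t2}(2). Your Step 2 is never used afterwards and can be dropped.
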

\begin{proof}
	(1) Suppose that $R\in \mathfrak{R}_p$. Then from Theorem \ref{t2}, $P_{nil}(R)=\frac{2p-1}{p^2}$. Conversely, suppose that $P_{nil}(R)=\frac{2p-1}{p^2}$. Assume that $R$ is non-local. Then $R\cong R_1\times R_2\times ... \times R_t,~t\ge 2$, where each $R_i\in\mathfrak{R}_{p_{i}^{k_i}}$ for $p_i\in\mathbb{P},~k_i> 0$. Then
	\begin{align*}
		&P_{nil}(R)=\prod\limits_{i=1}^{t}\frac{2p^{k_i}_i-1}{p^{2k_i}_i}=\frac{2p-1}{p^2}\\
		\implies&p^2\Big|(2p-1)\prod\limits_{i=1}^{t}p^{2k_i}_i\\
		\implies&p^2\Big|\prod\limits_{i=1}^{t}p^{2k_i}_i \text{ as }gcd(p^2, 2p-1)=1\\
		\implies&p\Big|\prod\limits_{i=1}^{t}p^{k_i}_i.
	\end{align*}
	So, $p\in\{p_1, p_2, ..., p_t\}$. Let $p=p_j$ for some $j$, $1\le j\le t$. Then $P_{nil}(R_j)\le\frac{2p-1}{p^2}$, which implies $P_{nil}(R)< \frac{2p-1}{p^2}$, a contradiction. Therefore, $R$ is local whose order is divisible by $p$, and hence, $R\in\mathfrak{R}_p$.
	
	\medspace
	
	(2) Assume that $R$ is local. Then $|R|=p^\alpha$ for some prime $p$ and integer $\alpha>0$. So, $P_{nil}(R)\le\frac{2p-1}{p^2}\le 3/4$. Also, the equality holds if and only if $R\in\mathfrak{R}_2$. Next, assume that $R$ is non-local. Then $R\cong R_1\times R_2\times ...\times R_t$ for some $t\ge 2$, where each $R_i$ is local with $P_{nil}(R_i)\le 3/4$. Therefore, $P_{nil}(R)< 3/4$.
\end{proof}

\medspace

Based on the previously discussed results, we provide the following table, which illustrates the possible values of NPP's and the class of rings attaining those values. In this table, we provide all class of rings with NPP at least $15/64$. It can be noted that the table can be extended for NPP's less than 15/64 as well providing the complete classification. Throughout this table, $\mathfrak{R}_{q_1}\times \mathfrak{R}_{q_2}$ denotes the class of rings $\{R_1\times R_2 : R_1\in\mathfrak{R}_{q_1}, R_2\in\mathfrak{R}_{q_2}\}$, where $q_1=p_1^{m_1}, q_2=p_2^{m_2}$ for primes $p_1, p_2$ and positive integers $m_1, m_2$.

\begin{table}[h!]
	\centering
	\caption{Rings $R$ with $P_{nil}(R)\ge 15/64$}\label{tab1}
	\setlength{\tabcolsep}{2.6pt}
	\setlength{\arrayrulewidth}{1pt}
	\renewcommand{\arraystretch}{1.4}
	\begin{tabular}{|c|c|c|}
		\hline
		Sl. No. & $P_{nil}(R)$	& All possible class of rings	\\ 
		\hline
		$1$      & $\frac{3}{4}=0.75$ 		  	&$\mathfrak{R}_2$\\ 
		$2$      &  	$\frac{9}{16}=0.5625$	  	&$\mathfrak{R}_2^2$\\ 
		$3$      &	$\frac{5}{9}\approx 0.5556$	    &$\mathfrak{R}_3$\\
		$4$      &    $\frac{7}{16}=0.4375$ 		&$\mathfrak{R}_4$\\ 
		$5$      &    $\frac{27}{64}\approx 0.4219$  	&$\mathfrak{R}_2^3$\\ 
		$6$      &    $\frac{15}{36}\approx 0.4167$     &$\mathfrak{R}_2 \times \mathfrak{R}_3$\\ 
		$7$      &	$\frac{9}{25}=0.36$		&$\mathfrak{R}_5$\\ 
		$8$      &    $\frac{21}{64}\approx 0.3281$     &$\mathfrak{R}_2 \times \mathfrak{R}_4$\\ 	
		$9$      &    $\frac{81}{256}\approx 0.3164$    &$\mathfrak{R}_2^4$\\ 
		$10$     &    $\frac{5}{16}=0.3125$     &$\mathfrak{R}_2^2\times\mathfrak{R}_3$\\ 
		$11$     &    $\frac{25}{81}\approx 0.3086$     &$\mathfrak{R}_3^2$\\ 
		$12$     &   	$\frac{27}{100}=0.27$	&$\mathfrak{R}_2 \times \mathfrak{R}_5$\\
		$13$     &   	$\frac{13}{49}\approx 0.2653$	&$\mathfrak{R}_7$\\ 
		$14$     &   	$\frac{63}{256}\approx 0.2461$	&$\mathfrak{R}_2^2 \times \mathfrak{R}_4$\\ 
		$15$     &   	$\frac{35}{144}\approx 0.2431$	&$\mathfrak{R}_3 \times \mathfrak{R}_4$\\ 
		$16$     &   	$\frac{243}{1024}\approx 0.2373$	&$\mathfrak{R}_2^5$\\ 
		$17$     &   	$\frac{15}{64}\approx 0.2344$		&$\mathfrak{R}_8, ~ \mathfrak{R}_2^3 \times \mathfrak{R}_3$\\ 
		\hline
	\end{tabular}
\end{table}

\medspace

\subsection{NPP for the ring of triangular matrices.}\label{ss3}
In this subsection, we investigate the NPP for the rings $T_n(R)$ and $S_n(R)$ for any ring $R$ and integer $n\ge 2$. The following lemma will be required to prove our main result.

\begin{lemma}\label{l1}
	For any matrix $[a_{ij}]\in T_n(R), 1\le i,j\le n$, $[a_{ij}]\in N(T_n(R))$ if and only if $a_{ii}\in N(R)~ \forall ~1\le i\le n$.
\end{lemma}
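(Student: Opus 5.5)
We prove both directions using the diagonal projection. For $A=[a_{ij}]\in T_n(R)$ and any $k\ge 1$, the $(i,i)$ entry of $A^k$ is $a_{ii}^k$. This holds because a product of upper triangular matrices has diagonal entries equal to the products of the corresponding diagonal entries; formally, $(AB)_{ii}=\sum_{i\le l\le i} a_{il}b_{li}=a_{ii}b_{ii}$. Equivalently, the map $\pi:T_n(R)\to R^n$, $A\mapsto(a_{11},\dots,a_{nn})$, is a ring homomorphism onto the direct product $R^n$. No identity in $R$ is needed for this.

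\emph{Necessity.} Suppose $A\in N(T_n(R))$, say $A^k=0$. Applying $\pi$ gives $a_{ii}^k=0$ for every $i$, so each $a_{ii}\in N(R)$.

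\emph{Sufficiency.} Suppose every $a_{ii}\in N(R)$. Choose $m$ with $a_{ii}^m=0$ for all $i$ (take the maximum of the individual indices). Then $B:=A^m$ is upper triangular with zero diagonal, i.e.\ strictly upper triangular. We show $B^n=0$. By induction on $k$, the entries of $B^k$ vanish in positions $(i,j)$ with $j-i<k$. The case $k=1$ is the strict upper triangularity of $B$. For the inductive step, write
\[
(B^{k+1})_{ij}=\sum_{l}(B^k)_{il}B_{lj}.
\]
A term can be nonzero only if $l-i\ge k$ and $j-l\ge 1$, which forces $j-i\ge k+1$. Since $j-i\le n-1<n$ for all positions, $B^n=0$. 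Hence $A^{mn}=0$ and $A$ is nilpotent.

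The only delicate point is that $R$ may be noncommutative and lack an identity. The argument above uses only associativity and distributivity of matrix multiplication, and the entrywise formulas never multiply by $1$. The step most worth checking is therefore the index bookkeeping in the strict-upper-triangular induction, and that is routine.
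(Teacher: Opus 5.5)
The paper states Lemma~\ref{l1} without any proof and treats it as standard, so there is no argument of the authors' to compare with. Your proof is correct and complete, and it is the standard justification.

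The diagonal map $\pi\colon T_n(R)\to R^n$ is a ring homomorphism, and its kernel is the ideal $J$ of strictly upper triangular matrices. Your induction shows that $J^n=0$. Necessity is then just that $\pi$ carries nilpotents to nilpotents. Sufficiency is that $A^m\in J$, which forces $A^{mn}=0$; this also gives you an explicit nilpotency index.

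You were right to check that nothing uses an identity or commutativity. The paper applies the lemma in Theorem~\ref{t5} to arbitrary finite rings, which may be noncommutative and non-unital.

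The multiplicativity of $\pi$, $(AB)_{ii}=a_{ii}b_{ii}$, is also the unstated fact the proof of Theorem~\ref{t5} relies on. That is what lets it pass from $[a_{ij}][b_{ij}]\in N(T_n(R))$ to the conditions $a_{ii}b_{ii}\in N(R)$. Your write-up supplies this step as well.
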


\begin{theorem}\label{t5}
	Let $R$ be a ring and $n\ge 2$ be an integer. Then 
	\begin{enumerate}
		\item[{(1)}] $P_{nil}(T_n(R))=[P_{nil}(R)]^n$.
		
		\item[{(2)}]  $P_{nil}(S_n(R))=P_{nil}(R)$.
	\end{enumerate}
\end{theorem}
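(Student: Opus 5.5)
The plan is to reduce everything to the diagonal entries, using Lemma \ref{l1}. For $A=[a_{ij}]$ and $B=[b_{ij}]$ in $T_n(R)$, the product $AB$ is again upper triangular. Its diagonal entries are $(AB)_{ii}=a_{ii}b_{ii}$, because for $i\le k\le i$ the only surviving term in $\sum_k a_{ik}b_{ki}$ is $k=i$. By Lemma \ref{l1}, $AB\in N(T_n(R))$ if and only if $a_{ii}b_{ii}\in N(R)$ for every $1\le i\le n$. The strictly upper triangular entries of $A$ and $B$ play no role in this condition.

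\textbf{Part (1).} We count the pairs $(A,B)$ directly. Let $m=n(n-1)/2$ be the number of strictly upper entries. Then
\[
|\{(A,B): AB\in N(T_n(R))\}| = |R|^{2m}\prod_{i=1}^{n}\big|\{(a,b)\in R\times R : ab\in N(R)\}\big|,
\]
since the off-diagonal entries of $A$ and $B$ are free and the diagonal conditions are independent across $i$. Each factor in the product equals $P_{nil}(R)|R|^2$. Dividing by $|T_n(R)|^2=|R|^{2m+2n}$ gives $[P_{nil}(R)]^n$.

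Equivalently, one can phrase this structurally. The map $T_n(R)\to R^n$ sending a matrix to its diagonal is a surjective ring homomorphism whose kernel is the set of strictly upper triangular matrices. The condition ``$AB$ nilpotent'' is pulled back from $R^n$, so $P_{nil}(T_n(R))=P_{nil}(R^n)$. This equals $[P_{nil}(R)]^n$ by Proposition \ref{l3}. I would present the counting version, since it works for rings without identity too.

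\textbf{Part (2).} We argue the same way. In $S_n(R)$ the diagonal of $A$ is $(a,\dots,a)$ and the diagonal of $B$ is $(b,\dots,b)$. So $AB\in N(S_n(R))$ if and only if $ab\in N(R)$.

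The one point requiring care is that nilpotence in $S_n(R)$ agrees with nilpotence in $T_n(R)$. This holds because powers of an element of $S_n(R)$ stay in $S_n(R)$, so Lemma \ref{l1} applies verbatim. Also, $S_n(R)$ is closed under products: the diagonal of a product of scalar-diagonal matrices is scalar.

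The count is then $|R|^{2m}\cdot P_{nil}(R)|R|^2$ out of $|S_n(R)|^2=|R|^{2m+2}$, which gives $P_{nil}(R)$.

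The main obstacle is only the justification of Lemma \ref{l1} itself, if it is not taken as given. For the forward direction, the diagonal of $A^k$ is $(a_{11}^k,\dots,a_{nn}^k)$, so $A$ nilpotent forces each $a_{ii}$ nilpotent. For the converse, we write $A^N=D+U$ with $D$ diagonal and $U$ strictly upper triangular. Choosing $N$ so that every $a_{ii}^N=0$ gives $D=0$, so $A^N=U$ is strictly upper triangular, and hence $(A^N)^n=0$. With that lemma in hand, both parts are direct counting arguments.
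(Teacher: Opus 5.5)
Your proposal is correct and follows essentially the same route as the paper: you use Lemma~\ref{l1} to reduce nilpotence of $AB$ to nilpotence of the diagonal products $a_{ii}b_{ii}$ and then count. Your explicit count over the free strictly upper entries, and your proof of Lemma~\ref{l1}, supply details the paper leaves implicit, including for $S_n(R)$. Your alternative via the diagonal homomorphism $T_n(R)\to R^n$ and Proposition~\ref{l3} is also valid, and contrary to your remark it needs no identity either, since neither that homomorphism nor Proposition~\ref{l3} uses one.
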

\begin{proof}
	(1) Suppose that $[a_{ij}], [b_{ij}]\in T_n(R)$. Then by Lemma \ref{l1}, $[a_{ij}][b_{ij}]\in N(T_n(R))$ if and only if $a_{ii}b_{ii}\in N(R) ~\forall ~1\le i\le n$. Thus, the probability that $[a_{ij}][b_{ij}]$ is nilpotent in $T_n(R)$ is equal to the probability that $a_{ii}b_{ii}$ is nilpotent in $R$ $\forall ~1\le i\le n$. Hence, $P_{nil}(T_n(R))=[P_{nil}(R)]^n$.
	
	\medspace
	
	(2) It follows analogously.
\end{proof}

\medspace

As a result of this, we obtain the following corollaries.

\begin{corollary}\label{c1}
	For any ring $R$ and integer $n\ge2$, we have
	\begin{enumerate}
		\item[{(1)}] $P_{nil}(T_n(R))\ge\left(\frac{2|R|-1}{|R|^2}\right)^n$, with the equality if and only if either $R$ is a field or $R=\{0\}$.
		
		\item[{(2)}]  $P_{nil}(S_n(R))\ge\frac{2|R|-1}{|R|^2}$, with the equality if and only if either $R$ is a field or $R=\{0\}$.
	\end{enumerate}
\end{corollary}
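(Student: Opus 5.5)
The plan is to combine Theorem \ref{t5} with Corollary \ref{c6}. By Theorem \ref{t5}(1) we have $P_{nil}(T_n(R))=[P_{nil}(R)]^n$, and by Theorem \ref{t5}(2) we have $P_{nil}(S_n(R))=P_{nil}(R)$. So both parts reduce to the bound $P_{nil}(R)\ge (2|R|-1)/|R|^2$ and its equality case.

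Corollary \ref{c6} gives exactly this bound for a ring of order $|R|$. For part (1), the map $t\mapsto t^n$ is strictly increasing on $[0,\infty)$, so raising the inequality to the $n$-th power preserves it. Part (2) is immediate from Theorem \ref{t5}(2).

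For the equality cases, I would use strict monotonicity of $t\mapsto t^n$ on nonnegative reals. It gives $[P_{nil}(R)]^n=\left(\frac{2|R|-1}{|R|^2}\right)^n$ if and only if $P_{nil}(R)=\frac{2|R|-1}{|R|^2}$. By Corollary \ref{c6}, this holds if and only if $R\cong\mathbb{F}_{|R|}$ or $R=\{0\}$, that is, $R$ is a field or the zero ring. Part (2) works the same way, with no exponent.

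The only subtle point is the degenerate case $R=\{0\}$. There the expression $(2|R|-1)/|R|^2$ evaluates to $1$, matching $P_{nil}=1$, so the convention is already covered by Corollary \ref{c6}. Otherwise there is no real obstacle: the whole argument is a direct substitution into previously established results.
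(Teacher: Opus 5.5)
Your proposal is correct and matches the paper's intended argument: the paper gives no separate proof and presents this corollary as an immediate consequence of Theorem~\ref{t5}, i.e.\ $P_{nil}(T_n(R))=[P_{nil}(R)]^n$ and $P_{nil}(S_n(R))=P_{nil}(R)$, combined with Corollary~\ref{c6}. Your use of the strict monotonicity of $t\mapsto t^n$ on $[0,\infty)$ to transfer both the inequality and its equality case in part (1) is exactly what that combination requires.
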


\begin{corollary}\label{c9}
	Let $R$ be a ring. Then
	\begin{enumerate}
		\item[{(1)}] For integers $m>n\ge2$, $P_{nil}(T_n(R))>P_{nil}(T_m(R))$.
		
		\item[{(2)}] For all integers $n,m\ge2$, $P_{nil}(S_n(R))=P_{nil}(S_m(R))$.
	\end{enumerate}
\end{corollary}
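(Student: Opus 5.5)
The plan is to derive both parts directly from Theorem \ref{t5}, which gives $P_{nil}(T_n(R))=[P_{nil}(R)]^n$ and $P_{nil}(S_n(R))=P_{nil}(R)$ for every ring $R$ and every integer $n\ge 2$.

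For (2), apply Theorem \ref{t5}(2) once with $n$ and once with $m$. Both values equal $P_{nil}(R)$, so they agree. No further argument is needed.

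For (1), fix integers $m>n\ge 2$. By Theorem \ref{t5}(1) the claim becomes
\[
[P_{nil}(R)]^n>[P_{nil}(R)]^m .
\]
Write $a=P_{nil}(R)$. We already know $0<a\le 1$, since the pair $(0,0)$ always gives a nilpotent product. When $0<a<1$, the map $k\mapsto a^k$ is strictly decreasing, so $a^n>a^m$ for $m>n$. This settles the generic case.

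The one real obstacle is the boundary case $a=1$. By the remark in the introduction, this happens exactly when $R$ is nil, and this includes $R=\{0\}$. In that case $T_k(R)$ is nil for every $k$ by Lemma \ref{l1}, so $P_{nil}(T_n(R))=P_{nil}(T_m(R))=1$ and the strict inequality fails. So the plan is to read the corollary with the implicit hypothesis that $R$ is not nil. I would state this restriction explicitly in the proof, noting that for nil $R$ equality holds instead. Once that hypothesis is in place, the proof is the monotonicity argument above.
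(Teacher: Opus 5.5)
Your argument is essentially the paper's: the corollary is stated without a separate proof, as an immediate consequence of Theorem \ref{t5}. Part (1) then follows from strict monotonicity of $k\mapsto [P_{nil}(R)]^k$, and part (2) from $P_{nil}(S_n(R))=P_{nil}(R)$.

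Your caveat is also correct and worth keeping. Part (1) fails for nil $R$ (for example $R=\{0\}$, which the paper itself admits in Corollary \ref{c1}), since then both sides equal $1$. So the strict inequality holds exactly when $R$ is not nil, and in particular whenever $R$ has an identity $1\neq 0$.
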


\begin{corollary}\label{c10}
	For any two rings $R_1,R_2$ and integer $n\ge2$, the following are equivalent:
	\begin{enumerate}
		\item[{(1)}] $P_{nil}(R_1)= P_{nil}(R_2)$.
		
		\item[{(2)}] $P_{nil}(T_n(R_1))= P_{nil}(T_n(R_2))$.
		
		\item[{(3)}] $P_{nil}(S_n(R_1))= P_{nil}(S_n(R_2))$.
	\end{enumerate}
\end{corollary}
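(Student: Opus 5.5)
By Theorem \ref{t5}, $P_{nil}(T_n(R))=[P_{nil}(R)]^n$ and $P_{nil}(S_n(R))=P_{nil}(R)$ for every ring $R$ and every $n\ge 2$. The plan is to reduce all three conditions to statements about the two real numbers $a=P_{nil}(R_1)$ and $b=P_{nil}(R_2)$, and then use elementary facts about real powers.

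First, (1) $\Leftrightarrow$ (3) is immediate. By Theorem \ref{t5}(2), $P_{nil}(S_n(R_1))=a$ and $P_{nil}(S_n(R_2))=b$, so (3) says exactly that $a=b$.

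Second, for (1) $\Leftrightarrow$ (2), Theorem \ref{t5}(1) turns (2) into the equation $a^n=b^n$. If $a=b$, then clearly $a^n=b^n$. For the converse I will use positivity, which the paper notes right after the definitions: $0<P_{nil}(R)\le 1$ for any ring $R$. The map $t\mapsto t^n$ is strictly increasing on $(0,\infty)$, hence injective there, so $a^n=b^n$ forces $a=b$. Positivity is in fact automatic, since the pair $(0,0)$ always contributes, giving $a,b\ge 1/|R_i|^2>0$.

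The only point that needs care is this injectivity step. Without positivity, $a^n=b^n$ with $n$ even would allow $a=-b$. Since NPP values lie in $(0,1]$, this case cannot occur.

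Combining the two equivalences gives (1) $\Leftrightarrow$ (2) $\Leftrightarrow$ (3).
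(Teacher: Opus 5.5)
Your proof is correct and follows the paper's intended route: the paper states this corollary as an immediate consequence of Theorem \ref{t5}, with (1)$\Leftrightarrow$(3) coming from $P_{nil}(S_n(R))=P_{nil}(R)$, and (1)$\Leftrightarrow$(2) coming from $P_{nil}(T_n(R))=[P_{nil}(R)]^n$ together with injectivity of $t\mapsto t^n$ on the positive reals. Your explicit use of positivity to exclude $a=-b$ when $n$ is even is exactly the detail the paper leaves implicit.
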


\medspace

In the following corollary, for any commutative ring $R$ with identity, we obtain the maximum values of the NPP's for the rings $T_n(R)$ and $S_n(R)$.

\begin{corollary}\label{c8}
	For any commutative ring $R$ with identity and integer $n\ge2$, we have
	\begin{enumerate}
		\item[{(1)}] $P_{nil}(T_n(R))\le 9/16$, and the equality holds if and only if $n=2$ and $R\in\mathfrak{R}_2$.
		
		\item[{(2)}]  $P_{nil}(S_n(R))\le 3/4$, and the equality holds if and only if  $R\in\mathfrak{R}_2$.
	\end{enumerate}
\end{corollary}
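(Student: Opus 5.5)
The argument combines Theorem \ref{t5} with Theorem \ref{t6}(2). Since $R$ is commutative with identity, Theorem \ref{t6}(2) gives $P_{nil}(R)\le 3/4$, with equality if and only if $R\in\mathfrak{R}_2$. The ring $T_n(R)$ is itself not commutative, so the bound of Theorem \ref{t6} cannot be applied to it directly. Instead, the formulas $P_{nil}(T_n(R))=[P_{nil}(R)]^n$ and $P_{nil}(S_n(R))=P_{nil}(R)$ from Theorem \ref{t5} reduce both statements to the ring $R$.

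For (2), we have $P_{nil}(S_n(R))=P_{nil}(R)\le 3/4$. Equality holds exactly when $P_{nil}(R)=3/4$, that is, when $R\in\mathfrak{R}_2$, by Theorem \ref{t6}(2).

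For (1), we have $P_{nil}(T_n(R))=[P_{nil}(R)]^n\le (3/4)^n\le (3/4)^2=9/16$, using $0<P_{nil}(R)\le 3/4<1$ and $n\ge 2$. If equality holds, then both inequalities are equalities.
\begin{itemize}
\item The second equality, $(3/4)^n=(3/4)^2$, forces $n=2$, since $3/4<1$.
\item The first equality, $[P_{nil}(R)]^n=(3/4)^n$, forces $P_{nil}(R)=3/4$, since $t\mapsto t^n$ is strictly increasing on positive reals. By Theorem \ref{t6}(2) this means $R\in\mathfrak{R}_2$.
\end{itemize}
Conversely, if $n=2$ and $R\in\mathfrak{R}_2$, then $P_{nil}(T_2(R))=(3/4)^2=9/16$.

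There is no real obstacle here. The only point needing care is that the equality case requires both inequalities to be tight simultaneously. This follows from strict monotonicity, which in turn relies on $P_{nil}(R)>0$.
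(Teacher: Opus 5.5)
Your proposal is correct and follows the paper's own proof: you reduce to $R$ via Theorem \ref{t5}, apply Theorem \ref{t6}(2), and chain $(3/4)^n\le(3/4)^2$. Your explicit treatment of the equality case, requiring both inequalities to be tight and using strict monotonicity together with $P_{nil}(R)>0$, simply spells out what the paper asserts in a single line.
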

\begin{proof}
	(1) We have $P_{nil}(T_n(R))=[P_{nil}(R)]^n$. Now, using Theorem \ref{t6}(2), we get 	$P_{nil}(T_n(R))\le (3/4)^n\le (3/4)^2=9/16$, and the equality holds if and only if $n=2$ and $R\in\mathfrak{R}_2$.
	
	\medspace
	
	(2) Similarly, we have $P_{nil}(S_n(R))=P_{nil}(R)\le 3/4$, with the equality if and only if $R\in\mathfrak{R}_2$.
\end{proof}

\medskip

\section*{Conclusion}
In this paper, we have addressed a probabilistic aspect on finite rings, which provides a numerical measurement on the nilpotency behaviour of the product of two randomly chosen elements. Additionally, we have characterized all commutative rings with identity using this probability measure, which aids in knowing the structure of those rings. The classification of arbitrary rings via this probability measure remains a prospective avenue for future research.

\end{document}